\newtheorem{newthm}{Theorem}
\newtheorem{theorem}{Theorem}[section]
\newtheorem{lemma}[theorem]{Lemma}
\newtheorem{proposition}[theorem]{Proposition}
\newtheorem{corollary}[theorem]{Corollary}
\newtheorem{definition}[theorem]{Definition}
\def\beginp{ {\noindent\em Proof.} }
\numberwithin{equation}{section}
\newcommand{\norm}[1]{\lVert#1\rVert}
\newcommand{\p}{\partial}
\newcommand{\scal}[1]{{\left\langle#1\right\rangle}}
\def\CCC{{\cal C}}
\def\CCC{{\cal C}}
\def\VVV{{\cal V}}
\def\al{\alpha}
\def\g{\gamma}
\def\G{\Gamma}
\def\ep{\varepsilon}
\def\la{\lambda}
\def\De{\Delta}
\def\de{\delta}
\def\smm{\smallsetminus}
\def\R{\mathbb R}
\def\C{ \mathbb C}
\def\D{\mathbb D}
\def\N{\mathbb N}
\def\lv{ \left(\begin{matrix} }
 \def\rv{\end{matrix}\right)}
\def\cal{\mathcal}
\def\grad{{\rm grad}}
\def\qed{\hfill{q.e.d.}}
\def\ds{\displaystyle}
\newcommand{\mylabel}[1]{\label{#1}}
\newcommand{\REFEQN}[1] { \begin{equation}\mylabel{#1} }
\newcommand{\ENDEQN}{\end{equation}}
\newcommand{\REFTHM}[1] { \begin{theorem}\mylabel{#1} }
\newcommand{\ENDTHM}{\end{theorem}}
\newcommand{\REFNTH}[1] { \begin{newthm}\mylabel{#1} }
\newcommand{\ENDNTH}{\end{newthm}}
\newcommand{\REFPROP}[1]{\begin{proposition}\mylabel{#1} }
\newcommand{\ENDPROP}{\end{proposition} }
\newcommand{\REFLEM}[1]{\begin{lemma}\mylabel{#1} }
\newcommand{\ENDLEM}{\end{lemma} }
\newcommand{\REFCOR}[1]{\begin{corollary}\mylabel{#1} }
\newcommand{\ENDCOR}{\end{corollary} }
\newcommand{\Ref}[1]{ (\ref{#1}) }
\def\beginp{ {\noindent\em Proof.} }
\def\smm{ {\smallsetminus }}
\def\ds{\displaystyle }
\def\mystrut{{\rule[-2ex]{0ex}{4.5ex}{}}}
\def\barz{\overline z}
\def\zbar{\overline z}
\begin{document}
\begin{center}{\large Smooth critical points of planar harmonic mappings \vspace{0.2cm}  \\
M. El Amrani, M. Granger, J.-J. Loeb, L. Tan}\end{center}
 \begin{abstract} In  a work in 1992, Lyzzaik studies local properties of light harmonic
mappings. More precisely, he classifies their critical points and accordingly studies their topological and geometrical
behaviours. We will focus our study on  smooth critical points of  light harmonic maps.
We will establish several relationships between miscellaneous local invariants, and show how to connect them to Lyzzaik's models. 
With a crucial use of Milnor fibration theory, we
get a fundamental and yet quite unexpected relation between three of
the numerical invariants, namely the complex multiplicity, the  local order  of the map and  the  Puiseux pair of the critical value curve.  We also derive 
similar results for a real and complex analytic planar germ at a regular point of its Jacobian level-0 curve. 
 Inspired by Whitney's
   work on cusps and folds,  we develop  an iterative algorithm computing the invariants. Examples are  presented 
 in order to compare the harmonic situation to the real analytic one. 
\footnote{2000 {\em  Mathematics Subject Classification.} 30F15, 32S05, 58K05.
 
{\em Key words and phrases.} Planar mappings, harmonic mappings, singularities, Milnor fibers, critical sets, critical value sets, local models, normal forms}\end{abstract}
\section{Introduction}

A map $f:W\to \R^2$  defined on a domain $W$ of ${\Bbb R}^2$ is called planar harmonic   
if both components of $f$ are harmonic functions. 
When  $W$ is simply connected, identifying  ${\Bbb R}^2$  with ${\Bbb C}$, 
the map $f$ can also be written in the complex form $p(z)+\overline{q(z)}$ 
where $p$ and $q$ are holomorphic functions on $W$.   We are interested on  germs $f$ 
of planar harmonic maps defined in a neighborhood of a point $a$
and we  will use mainly  the local complex form. 

For such a germ, we restrict ourselves to the situation 
given by the   two following conditions: 
\begin{enumerate}
\item The fiber at $f(a)$ is  the  single point $a$. 
\item The critical set $\CCC_f$  is a  smooth curve  at the point $a$. (The critical set is the vanishing locus 
of  the Jacobian). 
\end{enumerate}
 The first condition  means  that $f$ is light in the sense of Lyzzaik. In our  paper, we will also  present  some remarks for the non light case.  The second condition  implies that $\CCC_f$ is not a single  point. Otherwise, one can prove that 
 up to ${\emph C}^1$ change of coordinates,  $f$  is   holomorphic or 
 anti-holomorphic . 
 
 The critical value set $\VVV_f=f(\CCC_f)$  will play an important  role in our work.

 In our setting, the following natural equivalence relation is introduced: 
 we say that  two germs $f,g$  of planar harmonic maps defined 
 respectively  at points $a$ and $b$, are equivalent 
  if there exists a germ of biholomorphism  $u$ 
 between neighborhoods of $a$ and $b$,  and a real affine  bijection  $\ell$ 
 such that:  $\ell\circ f\circ u= g$.  
For this relation, we get  four numerical invariants and miscellaneous normal forms. 
 These tools allow to understand  analytic and topological facts 
 about germs of harmonic maps. In particular they shed  
 a new light on the geometric  models which appear in the work of  Lyzzaik. 
 
 In order to fully  understand  these invariants, the study of the complexification of $f$ 
 plays a fundamental role. 
 
 Explicitly, the four numerical invariants are: 
\begin{itemize}
\item[--] the absolute value $d$  of the  local topological degree. In our situation, Lyzzaik's work shows that 
  $d$ is $0$ or $1$, 
\item[--]the number $m$  which  is the lowest degree of a non constant monomial 
in the  power series expansion   of the harmonic germ $f$ 
 at the point $a$, 
\item[--] the local  multiplicity $\mu$  of the complexified map of $f$ 
   is defined as  the cardinality of the generic fiber,
\item[--] the number $j$ which is the valuation of the analytic curve $\VVV_f$, in a locally injective parametrization. 
 \end{itemize}
 
 Inspired by Lyzzaik's models \cite{Ly-light}, we prove in a self-contained way 
  that  the germs  $f$ are classified  topologically by
 the numbers $m$ and $d$ or equivalently by $m$ and the parity of $m+j$. 
 We give a simple description of these classes in terms of generalized 
 folds and cusps. 
 
 A main result of our work is that 
the  conditions $j\geq m$ and $\mu= j+m^2$  are  necessary and sufficient 
for the existence of a harmonic planar germ  satisfying the 
conditions 1. and 2. above. 

The  second  relation above was at first  guessed, using many computations. 
Our  proof is based on the theory of Milnor fibration for  germs of holomorphic functions on ${\Bbb C}^2$. 

 The number $j$ occurs also   at  another level. 
In fact we prove   that the critical value set $\VVV_f$ 
can be parametrized by: 
$x(t)= Ct^j +h.o.t$ and $y(t)= C't^{j+1}+h.o.t$, where $C$ and $C'$ are nonzero constants. 
In other words,  $\VVV_f$  is a curve with Puiseux pair $(j, j+1)$,  and   Puiseux theory gives 
a topological characterization of the  complexification of the critical value set. 
This pair  completes also Lyzzaik's description of $\VVV_f$ itself. 

  Every equivalence class of harmonic germs contains   a normal form 
$p(z)^m-\overline{z}^m$, with $p$ a holomorphic  germ tangent  to the 
identity at $0$. When $m=1$,  $\mu$ is equal to the order 
at the origin of the holomorphic germ $\overline{p}\circ p$ 
where $\overline{p}(z)$ is defined as $\overline{p(\overline{z})}$. As a by-product, 
one gets an algorithm to compute $\mu$ in the polynomial case. We obtain also 
relations between the numerical invariants of  $p(z)-\overline{z}$ and 
$p(z)^m-\overline{z}^m$. 
 
 The condition $m=1$ for a harmonic germ   means  that the gradient 
 of its Jacobian does not vanish at the point $a$. In an appendix, we
 generalize results obtained for numerical invariants  in  the harmonic case 
 to  real and complex analytic planar germs satisfying 
 the previous Jacobian condition. In this situation,  we get
 that $\VVV_f$ has still Puiseux pair $(j, j+1)$, with the same geometric consequences. 
 Moreover, as for the harmonic case,  $\mu=j+1$. 
 
 In the analytic case,  we get   also an algorithm  to compute $\mu$ inspired by a 
  fundamental work of Whitney \cite{Wh} on cusps and folds.  At the end of the appendix, some examples are presented 
 in order to compare the harmonic situation to the real analytic one.

 Lyzzaik has also studied the case of  non smooth critical sets. In a forthcoming work, 
 we hope to  extend some of our results to this more general situation. 
 
 {\bf Acknowledgement}. We would like to thank J.H. Hubbard, F. Laudenbach, A. Parusi\'nski and H.H. Rugh for inspiring discussions. 
  
\section{Basic concepts and main results}

Let $K=\R$ or $\C$. Let $U$ be a domain in $K^2$. By convention a domain is a  connected open set. Consider a planar mapping.
$$f: U\to K^2,\quad \lv x\\ y\rv \mapsto \lv f_1(x,y)\\ f_2(x,y)\rv.$$  We say that $f$
is {\it a $K$-analytic map}
if each of $f_1$ and $f_2$ can be expressed locally as convergent power series.
In this case denote by \\
-- $J_f$ the jacobian of $f$\\
--
$\CCC_f= \{ J_f=0\,\}$ the {\it critical set}  \\
-- $\VVV_f=f(\CCC_f)$ the {\it critical value set}.

 We say that \\
-- $z_0\in \CCC_f$ is a {\it regular critical point }of $f$ if $\nabla J_f(z_0)\ne (0,0)$;\\
--
$z_0\in \CCC_f$ is a {\it smooth critical point }of $f$  if $\CCC_f$ is an 1-dimensional   submanifold near $z_0$.

By implicit function theorem a regular critical point is necessarily a smooth critical point. But the converse is not
true. We will see many examples in the following.

We say that $f$ is {\it a planar  harmonic map} if $K=\R$ and each of $f_1$, $f_2$ is $C^2$ with a laplacian equal to zero. Recall that
 $\Delta f_j(x,y)=(\partial^2_x+\partial^2_y)f_j(x,y)$. Note that  in this case each of $f_i$ is locally the real part of a holomorphic map. Thus a planar harmonic map is in particular $\R$-analytic.

The {\it order} of an analytic map $f: K^m\to K^n$ at a point $p$ in the source, is the lowest total degree on a non zero monomial in  the coordinate-wise Taylor expansions of one of the components of $f-f(p)$ around $p$.

For an $\C$-analytic map $F: W\to \C^n$ with $W$ an open set of  $\C^n$ and for a point  
$w_0\in W$, we define
the {\it multiplicity}  of $F$ at $w_0$, by (see \cite{Ch})
$$ \mu(F,w_0)=\limsup_{w \to w_0} \# F^{-1}F(w )\cap X$$
where $X$ is an open neighborhood of $w_0$ relatively compact in $W$
such that $F^{-1}F(w_0)\cap \overline X=\{w_0\}$. If such $X$ does not exist, set $\mu(F,w_0)=\infty$.

In the situation above there is an open neighborhood $U$ of $F(w_0)$ and an open dense subset $U_1\subset U$ such that
for all $w\in U_1$, $\mu(F,w_0)=\# F^{-1}F(w )\cap X$. 
For a holomorphic map $\eta: U\to \C$ with $U$ an open set of  $\C$, and $w_0\in U$, the two notions coincide.

For a $\R$-analytic map $f: U\to \R^2$, in particular a planar harmonic map,  we define its multiplicity at a point to be the multiplicity of its holomorphic extension in $\C^2$.  We will also frequently use the well known fact that for a holomorphic map $f: U\to \C^2$ its multiplicity at a point $p_0=(x_0,y_0)\in U$ is equal to the codimension in the ring of power series of the ideal defined  by its component : \[
\mu(f,p_0)=\dim_{\C}\frac{\C\{u,v\}}{f_1(x_0+u,y_0+v),f_2(x_0+u,y_0+v)}.
\]
 See for example  \cite[theorem 6.1.4]{dJP}.

One objective of this work is to show that  harmonic maps around a smooth critical point of a given order have only two types
of topological behaviours, depending on the parity of the multiplicity.

Our investigation is based on Whitney's singularity theory on $C^\infty$ planar mappings, multiplicity theory of holomorphic maps of two variables and Lyzzaik's work on light harmonic mappings.

A smooth 1-dimensional manifold in $\R^2$ admits a smooth parametrization. If the critical set of a harmonic mapping is smooth somewhere, there is actually a parametrization that is in some sense natural. This induces a natural parametrization $\beta(t)$ of the critical value set.

\begin{definition} We denote by $R_{\ge k} (s)$ a convergent power series on $s$ whose lowest power in $s$ is at least $k$. A planar 
$K$-analytic curve $\beta: \{|s|<\ep\}\ni s\mapsto \beta(s)\in K^2 $ is said to have the \emph{order pair   $(j,k)$} at $ \beta(0)$, for some $1\le j<k\le \infty$, if up to reparametrization in the source and an analytic change of coordinates in the range $K^2$, the curve takes the form
$\beta(s)=\beta(0)+ \lv \mystrut Cs^j + R_{\ge j+1} (s)\\ C's^k + R_{\ge k+1 }(s)\rv$ with $C\cdot C'\ne 0$.
\end{definition}

Let us assume that $k$ is not a multiple of  $j$, and that the complexified parametrization of $\beta$ is locally injective. This is the case in particular if $(j,k)$ are co-prime. 
Then the order $j$ and more generally the order-pair $(j,k)\in \N^2$ is an analytic invariant of the curve independently of such a  parametrisation  : $j$ is the minimum, and $k$ the maximum of the intersection multiplicities $(\beta,\gamma)$ among all smooth $K$-analytic curves $\gamma$. This order-pair is also a topological invariant of the complexified curve because $\frac1{gcd(j,k)}(j,k)$ is its first Puiseux pair. Such a unique order-pair exists unless $j=1$, or $j=+\infty$.

Our main goal is to establish a relationship between the order of the critical value curve and the multiplicity, and then to connect these invariants to Lyzzaik's topological models. More precisely, we will prove:

\REFTHM{main} Let $f$ be a planar  harmonic map in a neighborhood of  $z_0$  with $z_0$ as a smooth critical point. 
\begin{enumerate} 
\item (Critical value order-pair)    The critical value curve has a natural parametrization and an order $j$  at $f(z_0)$. It has an order-pair of the form $(1,\infty)$ if $j=1$, and  $(j,j+1)$ if $1<j<\infty$. 
\item (Critical value order and multiplicity)  The three invariants $m$ order of $f$, $j$ order of the critical values curve and $\mu $ multiplicity of the complexified map on $(\C^2,z_0)$ are related by the  (in)equalities : 
\REFEQN{triple} \left\{\begin{array}{l} \infty\ge j\ge m\ge 1\\
 j+m^2=\mu.\end{array}\right.
\ENDEQN
\item (Topological model)  
Assume $\mu<\infty$. Let $\D$ be the unit disc in $\C$. There is a neighborhood
$\De$ of $z_0$, a pair of orientation preserving homeomorphisms
$h_1: \De\to \D, \ z_0\mapsto 0, \quad h_2 : \C\to \C,\ f(z_0)\mapsto 0$, 
and a pair of positive odd integers $2n^\pm-1$ satisfying \Ref{N-plus} below, such that
$$h_2\circ f\circ h_1^{-1}(re^{i\theta})=\left\{\begin{array}{ll} re^{i(2n^+-1)\theta} & 0\le \theta\le \pi\\
re^{-i(2n^--1)\theta} & \pi\le  \theta\le 2\pi\ .
\end{array}\right.$$ 
Moreover  $\# f^{-1}(z)=n^++n^-$ or 
$n^+ + n^- -2$ depending on whether $z$ is in one sector or the other of $f(\De)\smm \beta$.
\REFEQN{N-plus}\text{\fbox{$ \begin{array}{cl} \text{ $\mu$ even},&  \lv \mystrut 2n^+-1\\ 2n^--1\rv  \in \left\{ \lv m \\ m \rv, \lv m+1 \\ m+1 \rv\right\} \\
 \text{$\mu$ odd}, &  \lv \mystrut 2n^+-1\\ 2n^--1\rv  \in  \left\{ \lv m+1 \\ m-1 \rv, \lv m-1 \\ m+1 \rv, \lv m \\ m+2 \rv, \lv m+2 \\ m \rv\right\} \end{array}$}}
\ENDEQN\end{enumerate}
\ENDTHM 

We want to emphasise that guessing and proving the relation $\mu=j+m^2$ was the main point of the work. Our starting point was the case $m=1$, which corresponds to the critically regular case. We could establish then $\mu=j+1$. But this case does not indicate a general formula. A considerable amount of numerical experiments have been necessary to reveal a plausible general relation, and then results in singularity theory about Milnor's fibres had to be employed to actually prove the relation.
 
We will deduce the topological model  from our formula \Ref{triple} and a result of Lyzzaik \cite{Ly-light}.  More precisely, we will parametrize the critical value curve $\beta$ in a natural way and then express its derivative, as did Lyzzaik, in the form
 $\beta'(t)=Ce^{it/2}\cdot R(t)$, with $C\ne 0$ and $R(t)$ a real valued analytic function. 
 
 Lyzzaik defined in his Definition 2.2 the singularity to be of the {\em first kind} if $R(t)$ changes signs at $0$, which is equivalent to $j>0$ even, and of the {\em second kind} if $R(0)=0$ and $R(t)$ does not change sign at $0$,  which is equivalent to $j\geq1$ odd. He then deduced the local geometric shape of $\beta$ (cusp or convex) in Theorem 2.3  following the kind. What we do here is to push further his calculation to determine the order-pair of the critical value curve $\beta$, which then gives automatically its shape (cusp or convex).
 
Lyzzaik then provided  topological models  in his  Theorem 5.1  following the parity of an integer $\ell$ (which corresponds to our $m-1$) and the kind (or the shape of $\beta$) of the
singularity, corresponding in our setting to the parity of $m+j$. Thanks to our relation \Ref{triple},
we may then express Lyzzaik's topological model in terms of the parity of $\mu$.

Lyzzaik's  proof   relies on previous results of Y. Abu Muhanna and A. Lyzzaik \cite{AL}. We will reestablish his models
with  a self-contained proof.

As a side product, we obtain the following existence result (which was a priori not obvious):
 
\REFCOR{existence} Given any triple of integers
$(m,j,\mu)$ satisfying \Ref{triple}, there is a harmonic map $g(z)$ with a
smooth critical point $z_0$ such that $Ord_{z_0}(g) =m$, $\mu(g,z_0)=\mu$, and 
$(j, j+1)$ is the order-pair of the critical value curve at $g(z_0)$. 

Given any pair of  integers $n^\pm\ge 1$ satisfying $\left\{\begin{array}{ll} n^+=n^-\text{\ \ or}\\
|n^+-n^-|= 1\end{array}\right.$,  there are two consecutive integers $k,k+1$ and harmonic maps
with order $m=k$ and $m=k+1$ respectively  realizing the topological model \Ref{N-plus} for the pair $n^\pm$ and the order $m$. \ENDCOR

\section{Normal forms for planar harmonic mappings}

Recall that any real harmonic function on a simply connected domain
in $\C$ is the real part of some holomorphic function. Therefore, if $U
\subset \C$ is simply connected, and $f\,:\,U \to \C$ is a harmonic
mapping, then  $f=p+\overline{q}$ where $p$ and $q$ are holomorphic
functions in $U$ that are unique up to additive constants. We will say that $p+\overline{q}$ is a {\it local expression} of $f$. In a study around a point $z_{0}$ we will often take
the unique local expression in the form
$\,f(z)=f(z_{0})+p(z)+\overline{q(z)}$ with $p(z_{0})=q(z_{0})=0$.

\subsection{Existence and unicity of the normal forms}

\begin{definition}{\emph  A natural equivalence relation}. For $Z,W$ open sets in $\C$, with $z_0\in Z$, $w_0\in W$, and for
 harmonic mappings $f:Z\to \C$ and $g:W\to \C$, we say that $(f,z_0)$ and $(g,w_0)$ are {\it equivalent} and we write
 $$(f,z_0)\sim (g,w_0)$$
  if there is a bijective $\R$-affine map
$H: \C\mapsto \C, z\mapsto az+b\zbar+c$ and a biholomorphic map $h:W'\to Z'$ with $z_0\in Z'\subset Z$, $w_0\in W'\subset W$  such that 
$h(w_0)=z_0$ and $g=H\circ f\circ h$ on $W'$.
\end{definition}

\REFLEM{local normal forms} Let $f$ be a non-constant harmonic map
 defined on a neighborhood of $z_0$.  Then \REFEQN{Local} (f,z_0)\sim (g,0) \quad \text{for some}\quad g(z)=z^m-\overline{z^n(1+O(z))}\ENDEQN with $\infty\ge n\ge m\ge 1$ (here $O(z)$ denotes a holomorphic map  near $0$ vanishing at $0$). 
 
 Moreover if another map $G(z)=z^M-\overline{z^N(1+O(z))}$ with $\infty\ge N\ge M\ge 1$ satisfies
 $(G,0)\sim (g,0)$ then $(M,N)=(m,n)$. If $m<n$ then  $g(z)=\dfrac1{c^m}G(cz)$ for $c$ an $(m+n)$-th root of unity. 
   \ENDLEM
\beginp  We may assume $z_0=0$ and $f(0)=0$.

I. We may assume that  $f$ is harmonic on a simply connected open neighborhood  $V$ of $0$. One can thus write $f(z)=p(z)-\overline{q(z)}$ with $p,q$ holomorphic on $V$. Replacing $f$ by $f-f(0)$ we may assume 
$f(0)=0$, and we may also assume $p(0)=q(0)=0$. 

Case 0.  Assume $p\equiv 0$ or $q\equiv 0$. Replacing $f(z)$ by $\overline{f(z)}$ if necessary we may assume $q\equiv 0$.
In this case $p(z)=az^m(1+O(z))$ with $a\ne 0$  and there is a bi-holomorphic map $h$ so that $p(z)=(h(z))^m$. Therefore $f(z)=g(h(z))$ with
$g(w)=w^m$.

II. Assume now that none of $p,q$ is a constant function.  Replacing $f(z)$ by $\overline{f(z)}$ if necessary we may assume 
 $p(z)=az^m(1+O(z))$ and $q(z)=bz^n(1+O(z))$ with $\infty> n\ge m\ge 1$ and $a\cdot b\ne 0$.
 
Replacing $f$ by $(\overline{b\la})^{-n}f(\la  z)$  changes $a$ to $(\overline{b\la})^{-n}a\cdot \la ^m$ and $b$ to $1$. We may thus assume $f(z)=Az^m(1+O(z))-\overline{z^n(1+O(z))}$,  $A\ne 0$.
 
 Case 1.  $m< n$. Choose $\rho$ so that $\dfrac {A\cdot \rho^m}{\overline \rho^n}=1$.
 Replace $f$ by $\dfrac{f(\rho z)}{\overline \rho^n}$ we may assume 
 $f(z)=z^m(1+O(z))-\overline{z^n(1+O(z))}$.

 Case 2. $m=n$. We choose $\tau$ so that $\dfrac {A\cdot \tau^m}{\overline \tau^n}=\dfrac {A\cdot \tau^m}{\overline \tau^m}\in \R_+^*$.
 We may thus assume $$f(z)=cz^m(1+ O(z))-\overline{ z^m(1+O(z))},\quad c>0.$$
 If $c=1$ we stop. Assume $c\ne 1$.  Then $H(z):=z+\dfrac1c \zbar$ is an invertible linear map.
 And as $c$ is real, we get easily :
 \[
  H(f(z))= (c-\dfrac1c)z^m(1+O(z))-\overline{O(z^{m+1})}\qquad \text{with } c-\dfrac1c\ne 0\ .\]
 Replacing $f$ by $H\circ f$ we are reduced to Case 0 or Case 1.
 
 Therefore in any case we may assume $$f(z)=z^m(1+O(z))-\overline{z^n(1+O(z))},\quad 1\le m\le n, \ m<\infty,\ n\le \infty.$$
 Now there is a holomorphic map $h$ with $h(0)=0$, $h'(0)=1$ defined in a neighborhood of $0$ so that the holomorphic part of $f$
 can be expressed as $h(z)^m$. Then
 $$f\circ h^{-1}(z)=z^m-\overline{z^n(1+O(z))},\quad 1\le m\le n, \ m<\infty,\ n\le \infty$$ on some neighborhood of $0$. This establishes the existence of 
 normal forms.
 
 Let us now take a map $G(z)=z^M-\overline{z^N(1+O(z))}$ with $\infty\ge N\ge M\ge 1$ so that
 $(G,0)\sim (g,0)$ with $g(z)=z^m-\overline{z^n(1+O(z))}$ and $m\le n$. It is easy to see that $M=m$.
 Let now $h(z)=cz(1+O(z))$ be a  holomorphic map with $c\ne 0$ and $H(z)=az+b\overline z$ so that $H\circ G\circ h(z)=g(z)$.
 Then $$a\cdot (h(z))^m -a\cdot \overline{( h(z))^N(1+ O(z))}+ b\cdot \overline{(h(z))^m} -b \cdot ( h(z))^N(1+ O(z))=z^m -\overline{z^n(1+ O(z))}. $$
 Assume $n>m=M$. If $N=m$ then the terms $z^m$ and $\zbar^m$ have coefficients $(a-b)c^m$ and $(b-a)\bar c^m$ on the left hand side, and $(1,0)$ on the right hand side. This is impossible. So $N>m$ as well.
 
 Comparing   the  $\overline z^m$ term on both sides we get $b=0$, and then the $z^m$ term we get 
 $ac^m=1$.
 Comparing then the holomorphic part of both sides we get $h(z)=cz$. Now the anti-holomorphic part gives $N=n$ and $\overline a c^n=1$.
 It follows that $\overline c^{-m} c^n=1$. So $|c|=1$, $c^{m+n}=1$ and $c^{-m}G(cz)=g(z)$.
 \qed
 
 We remark that in the case $m=n$ the normal form is not unique. Here is an example:
 
 Let $G(z)=z+ iz^2-\barz$. For any $\Re b\ne -\frac12$  the map is equivalent to $G(z)+bG(z)+b\overline{G(z)}=(z+iz^2+bi z^2)-\overline{z+\overline b i z^2}=w+ O(w^2) -\bar w=: g(w)$
 for $w=z+\overline b i z^2$. 
 
 \subsection{Criterion and normal forms for critically smooth points}

We say that a  subset set $Q$ of $\C$ is a {\bf locally regular star} at $z_0$ of $\ell$-arcs if there is a neighborhood $U$ of $z_0$
and a univalent holomorphic map $\phi: U\to \C$ with $\phi(z_0)=0$ so that $Q\cap U=\{z, \phi(z)^\ell\in \R\}$.
If $\ell=1$ then $Q$ is a smooth arc in $U$.

 \REFLEM{Smooth} Let $f$ be a harmonic map in a neighborhood of $z_0$. The following conditions are equivalent:
\begin{enumerate}\item[1)]  $\CCC_f$ is a non-constant smooth $\R$-analytic curve in a neighborhood of $z_0$.
\item[2)]   For $m:=Ord_{z_0}(f)$,  in a local expression $f(z)=p(z)+\overline{q(z)}$, we have $m =Ord_{z_0}(p)=Ord_{z_0}(q)<\infty$, the map $\psi(z):=\dfrac{p'(z)}{q'(z)}$ extends to a holomorphic map at $z_0$, with $|\psi(z_0)|=1$ and $\psi'(z_0)\ne 0$.
\item[3)] $(f, z_0)\sim (g,0)$ with \REFEQN{degenerate} g(z)=z^{m}+bz^{m+1}+O(z^{m+2}) +\overline{z^{m}}, \quad  |b|=1;\ENDEQN
\end{enumerate}
Every equivalence class  of such $(f,z_0)$ has a representative in any of the following forms (with any choice of signs): $ (f, z_0)\sim (h,0) $ with 
\REFEQN{plus minus}\  h(z)=\pm z^{m}+bz^{m+1}+O(z^{m+2}) \pm \overline{z^{m}}  \text{\quad or\quad } h(z)=\pm \Big(z+bz^2+O(z^{2})\Big)^m \pm \overline{z^{m}}, \quad  |b|=1.\\
\ENDEQN Furthermore, $z_0$ is a regular critical point if and only if $m=1$.
\ENDLEM 

\beginp Assume at first $f(z)=p(z)+\overline{q(z)}$, with
 $$p(z)=z^m+ bz^{m+k} + O(z^{m+k+1}),\quad q(z)=z^m,\quad k\ge  1, \ b\ne 0.$$ Note that $J_f=|p'|^2-|q'|^2$.
Set $\psi(z)=\dfrac{p'(z)}{q'(z)}$. We have
 $$\CCC_f=\{J_f=0\}=\{q'=0\}\cup \{ |\psi |=1\}=\{0\}\cup \psi^{-1}(S^1)=\psi^{-1}(S^1) \ .$$
But  $\psi^{-1}(S^1)$ is a locally regular star at $0$ of $k$-arcs. So $\CCC_f$ is smooth at $0$ if and only if $k=1$, or equivalently, $\psi'(0)\ne 0$.
 
 This proves in particular the implication 3)$\Longrightarrow$1).
 
 Let us prove 1)$\Longrightarrow$3).
We may assume $f$ is in the local normal form \Ref{Local}. 
If $m\ne n$ then it is easy to see that $z_0$ is an isolated point of $\CCC_f$.  This will not happen under the smoothness assumption of $\CCC_f$.
So $m=n$. 

Replace $f$ by $\overline{f(az)/a^m}$ with $a^{2m}=-1$ we have $f(z)=p(z)+\overline{q(z)}$, with
 $$p(z)=z^m+ bz^{m+k} + O(z^{m+k+1}),\quad q(z)=z^m,\quad k\ge  1, \ b\ne 0.$$  Since $\CCC_f$ is smooth at $0$ by the argument above we have $k=1$.
  We may then replace $f$ by $f(\la z)/\la^m$ for $\la=\dfrac1{|b|}>0$ to get a normal form so that $|b|=1$. 
This is \Ref{degenerate}.

The rest of the proof is similar. We leave the details to the reader.
\qed

\section{Order $j(f,z_0)$ of the critical value curve for a harmonic map}

For a harmonic map near a smooth critical point, we will introduce what we call the natural parametrization of the critical value curve, and then compute its order-pair in this coordinate.

 Points 3, 4 and 5 of the following result are due to Lyzzaik, \cite{Ly-light}.
Just to be self-contained we reproduce Lyzzaik's proof here (with a somewhat different presentation).

\REFLEM{curves}      Assume $f(z)=p(z)+\overline{q(z)}$ is an harmonic mapping and is critically smooth at $z_0$. Set $\psi(z)=\dfrac{p'(z)}{q'(z)}$ and $m=Ord_{z_0}f$. 
\begin{enumerate}\item We have $\la:=\psi(z_0)\in S^1$ and $\psi'(z_0)\ne 0$. 
 The critical set $\CCC_f$  in a neighborhood of $z_0$ coincides with $\psi^{-1}(S^1)$, is  locally a smooth arc. We endow this arc what we call the natural parametrization by  
$\g(t):=\psi^{-1}(\la e^{it})$;\item We then endow the
 critical value set what we call its natural parametrization by  $\beta(t):=f(\g(t))$.  Set $j=Ord_0(\beta(t))$. Either
 $\beta(t)\equiv \beta(0)=f(z_0)$,  in which case $j=+\infty$ by convention, or  $\infty> j\ge m $.
\item For the line $L=\{f(z_0)+ s\sqrt\la,s\in \R\}$, the set $f^{-1}(L)$ is a locally regular star at $z_0$ with $2(m+1)$ branches. 
\item We have  
$\beta'(t)=\sqrt{\la e^{it}}R(t)$, with $R(t)=2\Re \Big( \sqrt{\la e^{it}} \dfrac d{dt}q(\g(t))\Big)$, an  $\R$-analytic real function of $t$. 
\item In the case $\beta'(t)\not\equiv 0$, the curve $t\mapsto \beta(t)$  is locally injective, has a strictly positive curvature in a punctured neighborhood of $0$,  turns always to the left,  is tangent to $L$ at $\beta(0)$.
 \item We have  $j-1=Ord_0(R)$ and  $\infty\ge j\ge m $.  Either $j=\infty$ and $\beta\equiv \beta(0)$,
 or  the curve $\beta$ has the order-pair    $(j,j+1)$ at $0$. \end{enumerate}
 \ENDLEM
 
 \beginp Point 1.  We have
 $$\CCC_f=\{J_f=0\}=\{q'=0\}\cup \{ |\psi |=1\}=\{q'=0\}\cup \psi^{-1}(S^1)\ .$$
 But $q(z)$ is not constant (otherwise $\psi\equiv \infty$) we know that $\{q'=0\}$ is discrete and avoids a punctured neighborhood
 of $z_0$. Therefore, reducing $U$ if necessary, we have
 $\{J_f=0\}\cap U=\psi^{-1}(S^1)\cap U$, and we may
 choose a holomorphic branch of $\sqrt{\psi(z)}$ for $z\in U$.
From Lemma \ref{Smooth} we know that $\psi(z_0)\in S^1$, $\psi'(z_0)\ne 0$ and so $\psi$ is locally injective. Reducing $U$ further if necessary, we see that
 $\{J_f=0\}\cap U$ is a smooth arc. We call the parametrization $\g(t)=\psi^{-1}(\psi(z_0)\cdot e^{it})$ the natural parametrization of $\CCC_f$.
 
 Point 2. We endow the critical value set with the natural image parametrization $\beta(t)=f(\g(t))$.
 
 Write $f(z)=p(z)+\overline{q(z)}$, $p(z)=a(z-z_0)^m+h.o.t.$ and $q(z)=q(z_0)+A(z-z_0)^{m}+ h.o.t.$ for some $a,A\ne 0$. Due to the smoothness of the critical set at $z_0$, we have 
 $\g(t)= z_0+\g'(0)\cdot t+ h.o.t.$ with $\g'(0)\ne 0$. So
   $$\beta(t)=f(\g(t))=p(\g(t))+\overline{q(\g(t))}=\beta(0)+(a \g'(0)^m+ \overline A \overline{\g'(0)^m}) t^m + h.o.t\ .$$
   It follows that $j=Ord_{0}\beta(t)$ satisfies $m\le j\le +\infty$. 

Point 3. Without loss of generality we may assume $z_0=0$ and $f(z_0)=0$. Choose a local expression $f(z)=p(z)+\overline{q(z)}$
so that $p(0)=q(0)=0$. Then $p(z)=az^m + O(z^{m+1})$ and $q(z)=bz^m + O(z^{m+1})$ for some $a,b\ne 0$. 
We have $\dfrac ab= \psi_f(0)=\la$. Rewrite now $f$ in the form
$$f(z)=\sqrt{\la}\Big(P(z) + \overline{Q(z)}\Big) = \sqrt{\la} \Big(P(z)-Q(z) + Q(z)+\overline{Q(z)}\Big)$$
with $P(z)=p(z)/\sqrt \la$. Then $P(z)$ and $Q(z)$ have  identical coefficient for the term $z^m$ and  $Ord_0(P)=Ord_0(Q)=m$. 
Set  \REFEQN{F} F(z)=P(z)-Q(z),\ r(z)=Q(z)+\overline{Q(z)}\quad \text{so that}\quad f(z)=\sqrt\la \Big(F(z)+ r(z)\Big).\ENDEQN
Note that $r(z)$ is real-valued, and $F(z)$ is holomorphic with multiplicity greater than $m$.
Write $F$ in the form $F(z)=c z^{m+n}(1+O(z))$ with $c\ne 0$ and $n\ge 1$. 
As $P(z)=Q(z)+ F(z)$, we have $$\psi_f(z)=\dfrac {\sqrt\la P'(z)}{\overline{\sqrt\la}Q'(z)}=\la\Big( 1+\dfrac{F'(z)}{Q'(z)}\Big) .$$
It follows that $n=Ord_0\psi_f$. But $Ord_0\psi_f=1$ by the smoothness assumption of  the critical set. So $n=1$
and $F$ takes the form $F(z)=c z^{m+1}(1+O(z))$ with $c\ne 0$.

Finally $f^{-1}L=\{f(z)\in \sqrt \la\cdot \R\}=\{F(z)+r(z)\in \R\}=\{F(z)\in \R\}=F^{-1}\R$. This set is therefore a locally regular star of
$2(m+1)$ branches.

Point 4. We follow the calculation of Lyzzaik. Let $z\in \CCC_f$. Then $|\psi(z)|=1$. So there are two choices of  $\sqrt{\psi(z)}$.
Fix a choice of the square root.

 $$\begin{array}{rcl}Df|_z&=&p'(z)dz+\overline{q'(z) }d\barz \\
&=&\mystrut q'(z)\psi(z)dz +\overline{q'(z) } d\barz\\
&=& \sqrt{\psi(z)}\left(\sqrt{\psi(z)}q'(z)dz+ \overline{ \sqrt{\psi(z)}q'(z)}d\barz \right)\\
&=&\mystrut   \sqrt{\psi(z)}\,\Re\, \left(2\sqrt{\psi(z)}q'(z) dz \right).\end{array}$$
As  $\g(t)$ is defined by $\psi(\g(t))=\la e^{it}$, we have
$$\beta'(t)=Df|_{\g(t)}(\g'(t))=   \sqrt{\la e^{it}}\, R(t),\quad \text{where}\quad R(t)= \Re\, \left(2\sqrt{\la e^{it}}\dfrac d{dt}q(\g(t)) \right).$$

Points 5 and 6. Assume that $\beta$ is not constant. Then  $Ord_0(\beta)=j<\infty$, $R(t)\not\equiv 0$ and $Ord_0R=j-1$. So $\dfrac{\beta'(t)}{R(t)}\to \sqrt\la$ as $t\to 0$.
It follows that $\beta(t)$ is tangent to $L$ at $\beta(0)$. Furthermore, 
$$R(t)=C(t^{j-1}+bt^j+O(t^{j+1}))\ , \ C\in \R^*,\ b\in \R\ .$$
A simple calculation shows that $\beta''(t)=\left(\dfrac{R'(t)}{R(t)} + \dfrac i2\right)\beta'(t)$. As $\dfrac{R'(t)}{R(t)}$ is real,
we see already that the oriented angle from $\beta'$ to $\beta''$ is in $]0, \pi[$. One
can also check the sign of  the curvature of $\beta$: 
\REFEQN{Curvature} \kappa_\beta(t)=\dfrac{\Im\,(\overline{\beta'(t)} \cdot \beta''(t))  }{|\beta'(t)|^3}=\dfrac 1{2|\beta'(t)|} > 0,\quad t\in ]-\de,\de[\smm \{0\}\ .\ENDEQN
This shows that there is some $\de>0$ such that $\beta(t)$ is on the left of its tangent for any $t\in ]-\de,\de[\smm\{0\}$ if $\beta'(0)=0$ and for any $t\in ]-\de, \de[$ if $\beta'(0)\ne 0$.

Moreover,
$$\beta(t)=\beta(0)+ \int_0^t \beta'(s) ds =\beta(0)+  C\sqrt{\la} \int_0^t e^{is/2}\left(s^{j-1}+b s^{j}+ h.o.t.\right)ds $$
$$=\beta(0)+  C\sqrt{\la}  \int_0^t  \left(s^{j-1}+ (b+\dfrac i 2) s^{j}+ h.o.t. \right)ds.$$
So
$$\Re \dfrac{\beta(t)-\beta(0)}{C\sqrt{\la}}=\ds\int_0^t  s^{j-1}(1+ O( s^{j}) )ds,\quad \Im \dfrac{\beta(t)-\beta(0)}{C\sqrt{\la}}=\ds \int_0^t  \dfrac{s^{j}}2(1+ O( s^{j+1}) )ds.$$ It follows that $t\mapsto \beta(t)$ is locally injective and $\beta$ has the order pair $(j,j+1)$ at $0$.
\qed

\begin{definition}  Let $f$ be a harmonic map and $z_0$ be a smooth critical point. 
We denote by $j(f,z_0)$ the integer so that the critical value curve has the order-pair $(j(f,z_0), j(f,z_0)+1)$ in its natural parametrization.
We will call $j(f,z_0)$ the {\it critical value order} of $f$ at $z_0$. 

Let us notice that $j(f,z_0)$ is an analytic invariant hence is a fortiori invariant under our equivalence relation on harmonic maps.
\end{definition}

\section{Between critical value order and multiplicity}
The objective here is to prove the following 

\REFTHM{general}  Given a harmonic map $G$ together with a smooth critical point $z_0$, the three local analytic invariants $m$ order of $f$, $j$ order of the critical values curve and $\mu $ multiplicity of the complexified map on $(\C^2,z_0)$ are related by the  (in)equalities : 
$$ \left\{\begin{array}{l} \infty\ge j\ge m\ge 1\\
 j+m^2=\mu.\end{array}\right.
$$
\ENDTHM

\subsection{A  formula for the multiplicity $\mu$}

For $p(z)=\sum a_iz^i$ we use $\overline p(z)$ to denote the power series $\overline p (z)= \sum \overline a_i z^i$.  The following lemma provides a formula for the multiplicity,  which in the case of a polynomial $p$ leads to an algorithm.

\REFLEM{first} Let $p(z)$ be a holomorphic map with $p(0)=0$. Let $f(z)=p(z)-\bar z$ and  $g(z)=p(z)^m-\zbar^m$  (with $m\ge 1$ an integer).   Then $\mu(f,0)=Ord_0(\bar p \circ p(z)-z)$ and more generally :
$$ \mu(g,0)= \sum_{\xi^m=\eta^m=1}Ord_0 \Big(\eta\, \overline p (\xi \, p (z))-z\Big). $$
\ENDLEM
 \beginp Consider the following holomorphic extensions of $f$ and $g$ in $\C^2$:
$$M_f: \lv u\\ v\rv \mapsto \lv p(u)-v\\ \overline p(v)-u\rv,\quad M_g:   \lv u\\ v\rv \mapsto \lv p(u)^m-v^m\\ (\overline p(v))^m-u^m \rv.$$
By definition $\mu(f,0)=\mu(M_f, {\bf 0})$ and $\mu(g,0)=\mu(M_g,0)$. Let us work directly with $M_g$.  It is known for example by \cite[theorem 6.1.4]{dJP} that $\mu(g,0)<\infty$ if and only if the germs of planar curves $p(u)^m-v^m=0$ and $p(v))^m-u^m$ have no branch in common. This condition means that there is a neighborhood of $(0,0)$ in $\C^2$, in which $\lv 0\\  0\rv $ is the only solution of the  system of equations $M_f\lv u \\ v\rv =\lv 0\\  0\rv $. Since this system is equivalent to the existence of $\xi,\eta$, such that 
\[
\xi^m=\eta^m=1,\text{ and } v=\xi p(u), \quad \eta\overline{p}(\xi p(u))-u=0
\]
the condition $\mu(g,0)=\infty$ is indeed equivalent to the finiteness of the order in the right-hand side of the statement of lemma \ref{first}.

We denote $\mu_1=\sum_{\xi^m=\eta^m=1}Ord_0 \Big(\eta\, \overline p (\xi \, p (z))-z\Big)$ this order.
Solving the equation $M_g\lv u \\ v\rv =\lv 0\\  t\rv $, we get 
\begin{equation*}
  \left\{
      \begin{aligned}
 v-\xi p(u)=0  \\
 \prod_{\xi^m=\eta^m=1}(\eta\overline{p}(\xi p(u))-u)=t      
   \end{aligned}
    \right.
\end{equation*}

There are $\mu_1$ distinct solutions in the variable $u$ for the second equation, hence $ \mu_1$ solutions for the system 
which merge at a single solution $(0,0)$ when $t\to 0$.
These solutions are all simple which means that $M_g$ is locally invertible. Applying again \cite[theorem 6.1.4]{dJP}, this proves that $\mu(M_g,{\bf 0})=\mu_1$. 
\qed

Note that  $|p'(0)|\ne 1$ iff $\mu(f,0)=1$. Otherwise $\mu(f,0)\ge 2$.

\subsection{Normalizations}

\REFLEM{relation} Any harmonic map $G$ near a smooth critical point $z_0$ is equivalent to $(g, 0)$ with $g(z)=p(z)^m-\barz^m$
for some integer
 $m\ge 1$  and some holomorphic function $p(z)=z + b z^2 + O(z^3)$, $|b|=1$. Furthermore, setting $f_\xi(z)=\xi \cdot p(z)-\barz$, $\xi\in \C$, we have
 \[
 \mu(G,z_0)=\mu(g,0)=\left\{\begin{array}{ll} m^2+m & \rm{ if \;} (-b^2)^m\ne 1 \\
\mu(f_{-b^2},0)+ (m-1)(m+2)> m^2+m& \text{\rm otherwise.} \end{array} \right. 
\]
\ENDLEM
Note that in the particular case $m=1$, the above formula becomes
$$\mu(G,z_0)=\mu(g,0)=\left\{\begin{array}{ll} 2 & \text{if } b^2\ne -1 \\
\mu(f_1,0)> 2& \text{otherwise.} \end{array} \right. $$

\beginp  The existence of the model map $g$ follows  from Lemma \ref{Smooth}. In the following the sums are over the $m$-th roots of unity for both $\eta$ and $\xi$. By Lemma \ref{first}, 
\begin{eqnarray*}\mu(g ,0) &=& \sum_{\xi^m=\eta^m=1}Ord_0 \Big(\eta\, \overline p (\xi \, p (z))-z\Big)\\
&=&  \left[\sum_{\eta\xi\ne 1}+  \sum_{ \eta\xi=1 }\right]Ord_0 \Big(\eta\, \overline p (\xi \, p (z))-z\Big) \\
&=& \sum_{\eta\xi\ne 1} Ord_0 \Big(\eta\, \overline p (\xi \, p (z))-z\Big)  +\sum_{ \xi^m=1}Ord_0 \Big( \overline{\xi p} (\xi \, p (z))-z\Big)\\
&\overset{Lem. \ref{first}}=& \sum_{\eta\xi\ne 1} Ord_0 \Big(\eta\, \overline p (\xi \, p (z))-z\Big)  +\sum_{ \xi^m=1,\xi\ne -b^2} Ord_0\Big( \overline{\xi p} (\xi \, p (z))-z\Big) + C\mu(f_{-b^2},0)\end{eqnarray*}
where $C=0$ if $-b^2$ does not coincide with any $m$-th root of unity, and $C=1$ otherwise.

There are $m(m-1)$ pairs of $(\eta,\xi)$ with $\eta^m=1=\xi^m$ and
$\eta\xi\ne 1$. For each pair of them,
$Ord_0 \Big(\eta\, \overline p (\xi \, p (z))-z\Big)=1$. This gives $m(m-1)$ for the first sum above.

Now for any $\xi$ with $\xi^m=1,\xi\ne -b^2$, we have $Ord_0\Big( \overline{\xi p} (\xi \, p (z))-z\Big)=2$. 
If $-b^2$ does not equal to any $m$-th root of unity, there are $m$ terms in the middle sum above, so $\mu(g,0)=m(m-1)+2m=m^2+m$. 
Otherwise there are $m-1$ terms, so $\mu(g,0)=m(m-1)+2(m-1) + \mu(f_{-b^2},0)=m^2+m-2 + \mu(f_{-b^2},0) $.
In this case one can check easily that $ \mu(f_{-b^2},0)>2$. So $\mu(g,0)> m^2+m$. 
\qed

Consider now
 $$g(z)=p(z)^m-\barz^m=\Big(z + b z^2 + O(z^3)\Big)^m-\barz^m, \quad |b|=1 .$$
A direct calculation using the first term of $\g(t)=\psi_g^{-1}(-e^{it})$ shows that the critical value curve $\beta$  in its natural parametrization satisfies 
$\beta(t) = \dfrac {2i}{(m+1)^m} \Im\Big(\dfrac{i^m}{b^m} \Big)t^m + o(t^m)$.

Clearly $m=Ord_0(g)$. Let $j$ be the order of $\beta(t)$ at $0$, and $\mu$  the multiplicity of $g$ at $0$. We want to prove 
$$j\ge m\quad  \text{and\quad} \mu= j+m^2.$$

Note that for $|b|=1$, $$(-b^2)^m=1 \Longleftrightarrow \Big(\dfrac i{ \overline{b}}\Big)^{2m}=1  \Longleftrightarrow \Big(\dfrac i b\Big)^{2m}=1
 \Longleftrightarrow  \Big(\dfrac i b\Big)^m=\pm 1  \Longleftrightarrow  \Im\Big(\dfrac{i^m}{b^m}\Big) =0\ .$$

This, together with Lemma \ref{relation}, gives:

\REFCOR{generic case} (The generic case) For $p(z)=z + b z^2 + O(z^3)$, $|b|=1$ with $(-b^2)^m\ne 1$,
and $g(z)=p(z)^m-\barz^m$, we have $$j =m\text{\quad and \quad}\mu= j + m^2=m+m^2.$$
If  $(-b^2)^m= 1$  then $j > m$. \ENDCOR

It remains to work on the degenerate case $(-b^2)^m=1$.

\REFLEM{b=i} Any harmonic map of the form $g(z)=(z+bz^2+o(z^3))^m-\barz^m$ with $(-b^2)^{m}=1$ is equivalent to a map of the form $(z+iz^2+o(z^2))^m-\barz^m$.\ENDLEM
\beginp One just need to replace $g$ by $g(\la z)/\overline{\la}^m$ for  $\la=1/(-ib)$. \qed

\subsection{The normalised degenerate case}

The following statement will complete the proof of Theorem \ref{general}. This is by far the hardest case.

\REFTHM{key-relation}  Let  $p(z)=z + i z^2 + O(z^3)$ be a holomorphic map in a neighborhood of $0$ and  $m\ge 1$ be an integer. Set $g(z)=p(z)^m-\barz^m$.  
Then $g$ is a harmonic map with $0$ as a smooth critical point. Let $j$ be the order of the critical value curve at $g(0)=0$ in its natural parametrization, and $\mu$  the multiplicity of $g$ at $0$. Then 
$$ j>m\quad\text{and}\quad \mu= j+m^2.$$
\ENDTHM
\beginp We know already that $0$ is a smooth critical point of $g$ and $j>m$ (Corollary \ref{generic case}).
Let's look at the complexification of $g$: 

$G\lv u \\ v\rv=\lv p(u)^m-v^m\\ -u^m+\overline p (v)^m \rv=\lv u^m(1+iu+o(u))^m - v^m \\
-u^m+ v^m(1-iv+o(v))^m \rv= \lv G_1\\ G_2\rv.$

The critical set in $\C^2$ of $G$ contains the set  $\{(uv)^{m-1}=0\}$ which consists of two branches $u=0$ and $v=0$.

The corresponding critical value branches are 

$G\lv 0 \\ v\rv = \lv -v^m\\  v^m(1-iv + O(v^2))^m \rv ,\  G\lv u \\ 0\rv =\lv u^m(1+i u+O(u^2))^m\\ -u^m\rv.$ 

Both are plane curves with order pair $(m, m+1)$. The other branch gives a critical value curve $\beta$ with order pair $(j, j+1)$, as we already know from the real calculation. By comparing the two parametrizations, an elementary calculation shows that these two branches are distinct. They are also distinct from the third branch since we shall prove that $j>m$ hence that they have different first Puiseux pairs. 

The local behavior of $G$ at each of these critical branches, off the origin, is given by the following:

\REFLEM{local} The multiplicity of $G$ at a real critical branch point (off the origin) is 2, and the
multiplicity of $G$ at a non-real critical branch point (off the origin) is $m$. 
\ENDLEM 
\beginp
The expression of $G$ at the point $\lv 0 \\ v_0\rv$ in local coordinates $\lv u \\ w\rv=\lv u \\ v-v_0\rv$ is 
$$G\lv u \\ v_0+w\rv-\lv -v_0^m\\ \overline p(v_0)^m\rv  =\lv p(u)^m-mv_0^{m-1}w+O(w^2))\\
-u^m+Q(v_0)w+O(w^2)\rv .
$$ 
By using to Taylor formula for $\overline{p}(v_0+w)^m$ we find $Q(v_0)=m\overline{p}(v_0)^{m-1}\overline{p}'(v_0)$. In order to see that the germ of $G$ at the point $\lv 0 \\ v_0\rv$ is equivalent by analytic coordinates changes to the germ $\lv \hat{u} \\ \hat{v}\rv \to \lv \hat{u}^m \\ \hat{v}\rv$, it is sufficient to check that $Q(v_0)\neq  mv_0^{m-1}$ for any small enough non zero $v_0$. The proof for the branch $u\to G\lv u \\ 0\rv$ is similar.
\qed

The preimage $G^{-1}(S)$, of $S$ a small
sphere centered at the origin, is a smooth 3-manifold, and in fact we are going to prove a stronger result stating that the pair $(G^{-1}(B),G^{-1}(S))$ is diffeomorphic to the pair made of the standard ball and the standard sphere.
	
	We notice that $G^{-1}(S)$ is defined by the equation $N(u,v):=\left\Vert F\lv u \\ v\rv\right\Vert^2= \epsilon^2 $, and is the boundary of $G^{-1}(B)=\left\{\lv u \\ v\rv\mid\left\Vert F\lv u \\ v\rv\right\Vert^2\leq \epsilon^2\right\}$.  The above result will then follow  from a general statement about a function $N$ defined on an open set of $\R^n$ given in the next lemma.

\begin{lemma}\label{isotopy}
Let $N : W\to \R$ be a positive real analytic map defined in a neighborhood of $0\in \R^n$ such that $N^{-1}(0)=\{0\}$. Then there is $\epsilon_0 >0$ such that for $0<\epsilon\leq \epsilon_0$ the pair of sets 
\[
\left(\{x\in \R^n\mid N(x)\leq \epsilon^2\},\{x\in \R^n\mid N(x)= \epsilon^2\}\right)
\] is diffeomorphic to the standard ball and the standard sphere.
\end{lemma}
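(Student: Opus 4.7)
The plan is to combine the \L{}ojasiewicz gradient inequality for real analytic functions with the gradient flow of $N$. The \L{}ojasiewicz gradient inequality applied to $N$ at its isolated zero $0$ provides constants $c > 0$ and $\theta \in (0, 1)$ such that $\|\nabla N(x)\| \geq c\, N(x)^\theta$ in a neighborhood of $0$. Since $N^{-1}(0) = \{0\}$, this forces $\nabla N \neq 0$ on $\{0 < N \leq \epsilon_0^2\}$ for some $\epsilon_0 > 0$, so each level set $\{N = \epsilon^2\}$ (for $\epsilon \leq \epsilon_0$) is a compact smooth hypersurface, and $\{N \leq \epsilon^2\}$ is a smooth manifold with smooth boundary.

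The rescaled gradient $v := \nabla N/\|\nabla N\|^2$ is a smooth vector field on $\{0 < N \leq \epsilon_0^2\}$ whose flow $\phi_t$ satisfies $N(\phi_t(x)) = N(x) + t$, yielding smooth diffeomorphisms between any two level sets and trivializing each collar $\{a \leq N \leq b\}$ as a smooth product. The \L{}ojasiewicz bound $\|\dot\phi_t(x)\| \leq c^{-1}(N(x) + t)^{-\theta}$ is integrable on $[-N(x), 0]$ since $\theta < 1$, so backward trajectories from $\{N = \epsilon^2\}$ have finite length, and being confined to the compact set $\{N \leq \epsilon^2\}$, converge as $t \to -N(x)^{-}$ to a point of $N^{-1}(0) = \{0\}$. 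This produces a continuous surjection $\Phi : \{N = \epsilon^2\} \times [0, 1] \to \{N \leq \epsilon^2\}$ which collapses $\{N = \epsilon^2\} \times \{1\}$ to the origin, is smooth away from the cone point, and is a homeomorphism onto.

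To upgrade this conic structure to a $C^\infty$ diffeomorphism of pairs with $(D^n, S^{n-1})$, I would compare $\{N \leq \epsilon^2\}$ with a small Euclidean ball $B_r \subset \{N < \epsilon^2\}$. An isotopy-extension argument inside the cobordism $\{N \leq \epsilon^2\} \smm \mathrm{int}(B_r)$, combining the flow $\phi_t$ with the radial vector field, would identify $\{N = \epsilon^2\}$ diffeomorphically with $S_r \cong S^{n-1}$; matching this boundary diffeomorphism with the standard radial structure inside $B_r$ then extends to a diffeomorphism of pairs $(\{N \leq \epsilon^2\}, \{N = \epsilon^2\}) \cong (D^n, S^{n-1})$. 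The principal technical obstacle is precisely this last matching: when $N$ is degenerate at $0$, the gradient flow of $N$ is not a priori compatible with the Euclidean radial structure at the origin, and the uniform control on $\|\nabla N\|$ provided by \L{}ojasiewicz is essential to carry out the required smoothing and interpolation.
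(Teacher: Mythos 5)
Your route (\L{}ojasiewicz gradient inequality, rescaled gradient flow of $N$, then a cobordism between $\{N=\epsilon^2\}$ and a small round sphere) is genuinely different from the paper's, which instead linearly interpolates $N_t(x)=t\|x\|^2+(1-t)N(x)$ and applies Ehresmann's fibration theorem. But your argument has a real gap exactly at the point you flag as "the principal technical obstacle," and the tool you invoke there is the wrong one. To show that the cobordism $\{N\leq\epsilon^2\}\smallsetminus \mathrm{int}(B_r)$ is a product (equivalently, to build a nowhere-zero vector field interpolating between $\nabla N$ near the outer boundary and the radial field near the inner one), you need \emph{directional} control: a uniform bound keeping the angle between $x$ and $\nabla N(x)$ away from $\pi$ as $x\to 0$. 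The \L{}ojasiewicz inequality $\|\nabla N(x)\|\geq c\,N(x)^\theta$ bounds only the \emph{norm} of the gradient and says nothing about its direction relative to $x$; since $N$ need not be monotone along rays, $\langle x,\nabla N(x)\rangle$ can be negative, and without a bound on how negative (relative to $\|x\|\cdot\|\nabla N(x)\|$) the interpolated field $2tx+(1-t)\nabla N(x)$ could vanish. This is precisely the content of the paper's second and third steps: via the curve selection lemma one first shows that along any analytic arc $\gamma(t)\to 0$ the limit of $\langle\gamma,\nabla N\circ\gamma\rangle/(\|\gamma\|\,\|\nabla N\circ\gamma\|)$ is $\geq 0$ (because $N\circ\gamma>0$ forces the leading coefficient $\scal{a,b}\geq 0$), and then, again by curve selection, that this cosine is $\geq -\tfrac12$ uniformly on a punctured neighborhood. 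Your proposal contains no substitute for this estimate, so the "isotopy-extension argument combining $\phi_t$ with the radial vector field" is not justified as written.

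The earlier parts of your plan are sound but deliver less than needed: nonvanishing of $\nabla N$ off the origin is fine (the paper gets it from curve selection, you from \L{}ojasiewicz), and the finite-length backward trajectories give the standard local conic structure, i.e.\ a \emph{homeomorphism} of $\{N\leq\epsilon^2\}$ with the cone on $\{N=\epsilon^2\}$. That does not by itself identify the pair smoothly with $(D^n,S^{n-1})$, nor even identify $\{N=\epsilon^2\}$ with $S^{n-1}$. If you want to salvage your approach, import the paper's angle estimate (or prove an equivalent one); once you know $\langle x,\nabla N(x)\rangle\geq -\tfrac12\|x\|\,\|\nabla N(x)\|$ near $0$, a convex combination of $x/\|x\|^2$-type radial flow and $\nabla N/\|\nabla N\|^2$ is nowhere zero and transverse to both boundary spheres, the cobordism is a product, and gluing the product collar onto the standard ball $B_r$ gives the desired diffeomorphism of pairs.
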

\beginp
First we prove that $N$ is a submersion outside the origin if we restrict to a small enough neighborhood of $0$ :  
there is a constant $r_0>0$ such that if $0<\norm{x}\leq r_0$ we have 
\[
\grad N(x):=\left(\frac{\p N}{\p x_1},\dots,\frac{\p N}{\p x_n}\right)(x)\neq0.
\]
Indeed if this was not true, we could by the curve selection lemma \cite[lemma 3.1]{Mi} find an analytic path $\gamma : [0,\eta_0[\longrightarrow W$ such that $ \gamma(0)=0$ and $\gamma(t)\neq0$ for $t\in ]0,\eta_0[$, and $\grad N(\gamma(t))=0$. But then we would have $\frac{d}{dt}(N(\gamma(t)))=\scal{\gamma'(t),\grad N(\gamma(t))}=0$. But then $N(\gamma(t))$ would be constant equal to $N(\gamma(0))=0$ and this contradicts $\gamma(t)\neq0$ for $t\neq0$. 

In a second step we show that the gradient of $N$ tends to point out from $0$ when $t\to 0$.
More precisely this means that given an analytic path $\gamma : [0,\eta_0[$ such that $ \gamma(0)=0$ and $\gamma(t)\neq0$ for $t\in ]0,\eta_0[\to W$, we have:   
\[
\underset{t\to 0}{\lim\;\;}\frac{\scal{\gamma(t),\grad N(\gamma(t))}}{\norm{\gamma(t)}\cdot\norm{\grad N(\gamma(t))}}\geq0.
\]
Indeed let $\alpha,\beta$ be the valuations of $\gamma$ and $\grad N\circ\gamma$. We have power series expansions with initial vector coefficients $a,b\in \R^4$ : 
\[
\gamma(t)=at^\alpha+o(t^\alpha), \quad \grad N(\gamma(t))=bt^\beta+o(t^\beta)
\]
and the limit above is  $\frac{\scal{a,b}}{\norm{a}\norm{b}}.$ The expansion of the derivative of $\gamma$ is $\gamma'(t)=\alpha at^{\alpha-1}+o(t^{\alpha-1})$, and therefore $\frac{d}{dt}(N(\gamma(t)))=\scal{\gamma'(t),\grad N(\gamma(t))}=\alpha\scal{a,b}t^{\alpha+\beta-1}+o(t^{\alpha+\beta-1})$. Since $N(\gamma(t))>0$ for small enough positive $t$, this forces the inequality  $\scal{a,b}\geq 0$ and we are done. 

We deduce a quantified version of this behaviour of the gradient vector field, showing that the angle of the vectors $x,\grad N(x)$ is bounded away from $\pi$.
Precisely making the constant $r_0$ above smaller if necessary
we may assume that for $0<\norm{x}\leq r_0$ :
\[
\frac{\scal{x,\grad N(x)}}{\norm{x}\cdot\norm{\grad N(x)}}\geq -\frac12.
\]
This claim is a consequence of the curve selection lemma, because the limit property of $\grad N(x)$ implies that $0$ cannot be in the closure of the semi analytic set 
\[
Z:=\{x\in W \mid 0<\norm{x}\leq r_0, \quad \scal{x,\grad N(x)}<-\frac12\norm{x}\cdot\norm{\grad N(x)}\}.
\]
Our third and last step is to show that we have a homotopy between $\Sigma =N^{-1}(\epsilon ^2)$ and the standard ball $\norm{x}^2=\epsilon ^2$ because the gradient of interpolations between $N$ and $\norm{x}^2$ never vanishes outside the origin. Indeed the choice we made for $r_0$ has the following consequence: for any $t\in [0,1]$, we have $2tx+(1-t)\grad N(x)\neq 0$ and this implies that the the relative gradient with respect to $(x_1,\dots, x_n)$ of the deformation $N(t,x):=t\norm{x}^2+(1-t)N(x)$ is non zero for any $x\neq 0$:
\[
\forall t\in [0,1],\forall x, 0<\norm{x}\leq r_0 , \quad \grad_xN(t,x)=\left(\frac{\p N}{\p x_1},\dots,\frac{\p N}{\p x_n}\right)(t,x)\neq0 .
\]
Using the continuity of $N$ let us choose $\epsilon_0<r_0$ such that $N(x)\leq \epsilon _0^2\Longrightarrow \norm{x}<r_0$. Then the property we obtained on the gradient shows that for each $t\in [0,1]$ the set $\Sigma_t:=N_t^{-1}(\epsilon^2))$ (resp $\Sigma:=N^{-1}(\epsilon^2))$ is a submanifold of the open ball $B(0,r_0)$ (resp. of the product $[0,1]\times B(0,r_0)$). The set $B_t=N_t^{-1}([0,\epsilon_0^2])$ is a manifold with boundary $S_t$ and interior an open set of $\R^n$. Similarly $\Sigma$ is a part of the boundary of $B=N^{-1}([0,\epsilon_0])\subset [0,1]\times B(0,r_0)$ to be completed by $B_0\cup B_1$\footnote{we might avoid easily to consider a manifold with a corner along $S_0\cup S_1$ by enlarging slightly the range of $t$ to an open interval $]-\eta,1+\eta[$.}. We notice that $(B_1,\Sigma_1)$ is the standard ball of radius $\epsilon_0$ with its boundary.

Finally the restriction to $\Sigma $ of the projection $(t,x)\longrightarrow t$ is a submersion. This implies by the version with boundary of a well known theorem of Ehresmann \cite{Eh} that the pair $(B,\Sigma)$ is locally trivial above $[0,1]$
which means that we have a diffeomorphism 
\[
(B,\Sigma)\longrightarrow [0,1]\times (B_0,\Sigma_0).
\]
In particular we have a diffeomorphism $(B_0,\Sigma_0)\longrightarrow  (B_1,\Sigma_1)$ as expected.
\qed

Let us now come back to the map $G$. Take a small  round closed ball $D$ of radius $\epsilon_0$ and its preimage $B$ so that $G:B\to D$ is a covering of degree $\mu$ outside the
critical value curves, and that $\partial D$ is transverse to the critical value set. 
It follows from lemma \ref{isotopy} applied to $N=\norm{G}^2$ that $\partial B$ is a smooth 3-variety diffeomorphic to a sphere. We take $r_0$ as in this lemma and denote : $B_\epsilon=\{x\mid N(x)\leq \epsilon^2\}\subset D_{r_0}$, $\Sigma_\epsilon=\p B_\epsilon$ for all $0<\epsilon\leq\epsilon_0$.

 Let $\ell: \lv x_1\\ x_2\rv\mapsto ax_1 + bx_2$ be a generic linear form. For $\epsilon_0$ small enough the disc  $(\ell=0)\cap D$ intersects the critical value set  $\VVV$ only at the origin.
Therefore for $t$ a small enough non zero complex number, the line $L_t$ with equation 
 $\ell(u,v)=t$ is transversal to the boundary of $D$ and $L_t\cap D$ is  a disc  $\Delta_t$. Furthermore if $t\neq0$ $L_t$, intersects the critical value set  $\VVV$ at $j+2m$ points contained in the interior of $D$. 
 
 Set $Y_t :=\{\ell(G_1,G_2)=t\}=G^{-1}(\{\ell=t\})$. 
 \begin{proposition}
With well chosen $r_0,\epsilon_0$, as in the proof of lemma \ref{isotopy} and $t\neq0$ small enough, $X_t:=Y_t\cap B_{\epsilon_0}$ is diffeomorphic to the Milnor fiber of the function $\ell(G_1,G_2)$. 
 \end{proposition}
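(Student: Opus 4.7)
The plan is to identify $X_t$ with the classical Milnor fiber $F_t^{\mathrm{std}} := h^{-1}(t) \cap B^{\mathrm{std}}_\eta$ of $h := \ell(G_1, G_2)$, where $B^{\mathrm{std}}_\eta$ is a small standard Euclidean ball and $0 < |t| \ll \eta$. Since $B_{\epsilon_0}$ is defined via the nonstandard rug function $N(x) = \|G(x)\|^2$ rather than the Euclidean norm, the approach is to interpolate between the two rug functions and apply Ehresmann's theorem once transversality with $h^{-1}(t)$ is preserved throughout the deformation.

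First, I would verify that $h$ has an isolated critical point at the origin for generic $\ell$. Because $\mu(G,0) < \infty$ implies $G^{-1}(0) = \{0\}$ locally, the curve $h^{-1}(0) = G^{-1}(\ell^{-1}(0))$ is smooth off the origin precisely when the line $\ell^{-1}(0)$ meets the critical value set $\VVV$ only at the origin, which is exactly the genericity hypothesis on $\ell$ stated just before the proposition. Hence $h$ is a Milnor germ and the classical fibration $h : B^{\mathrm{std}}_\eta \cap h^{-1}(D_\delta^*) \to D_\delta^*$ with fiber $F_t^{\mathrm{std}}$ exists for $0 < \delta \ll \eta$.

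Following the pattern of Lemma \ref{isotopy}, I would then introduce the one-parameter family of rug functions $N_s(x) := s\|x\|^2 + (1-s)\|G(x)\|^2$, $s\in [0,1]$. Each $N_s$ is real analytic, non-negative, and vanishes only at $0$. A parameter-uniform version of the curve selection argument of Lemma \ref{isotopy} produces a single radius $r_0 > 0$ such that $\scal{x, \grad_x N_s(x)} / (\|x\|\cdot \|\grad_x N_s(x)\|) > -1/2$ on $0 < \|x\| \leq r_0$ for every $s \in [0, 1]$. Picking $\epsilon_0$ small enough that $N_s(x) \leq \epsilon_0^2$ forces $\|x\| < r_0$ for all $s$, the sublevel sets $B_s := \{N_s \leq \epsilon_0^2\}$ form a smooth family of topological balls with boundary spheres $\Sigma_s$.

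The main obstacle will be establishing that $h^{-1}(t)$ is transversal to $\Sigma_s$ for $|t|$ sufficiently small and \emph{all} $s\in[0,1]$. The argument is a parameter-uniform analogue of Milnor's classical transversality: the semianalytic locus in $[0,1]\times W$ on which $\grad_x h$ is $\R$-linearly dependent with $\grad_x N_s$ cannot accumulate at $\{s\}\times\{0\}$, by yet another application of the curve selection lemma, invoking the fact that $h$ is holomorphic with isolated critical point. Once this transversality is in hand, the set
\[
Z := \{(s, x) \in [0,1] \times W : N_s(x) \leq \epsilon_0^2,\; h(x) = t\}
\]
is a smooth compact manifold with corners, and the projection $Z \to [0,1]$ is a proper submersion. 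Ehresmann's theorem for manifolds with boundary (as invoked at the end of the proof of Lemma \ref{isotopy}) yields a trivialization over $[0,1]$, giving a diffeomorphism $X_t = Z|_{s=0} \cong Z|_{s=1} = h^{-1}(t) \cap B^{\mathrm{std}}_{\epsilon_0}$. For $\epsilon_0$ chosen small and then $|t|$ small enough, the right-hand side is exactly the Milnor fiber of $h$, which is the desired conclusion.
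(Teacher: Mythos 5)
Your first two steps are sound and consistent with the paper's setup: the genericity of $\ell$ does give $h:=\ell(G_1,G_2)$ an isolated critical point, and the uniform angle estimate for the gradients of the interpolated rug functions $N_s=s\norm{x}^2+(1-s)\norm{G(x)}^2$ is exactly what the proof of Lemma \ref{isotopy} already supplies. But your overall route is genuinely different from the paper's, and it has a real gap at the one step you yourself flag as the main obstacle. The paper never compares the two rug functions through an interpolation of their level sets: it interleaves the pseudo-balls with standard Milnor balls, $B_\epsilon\subset B'_r\subset B_{\epsilon_0}\subset B'_{r_0}$, and uses the induced factorizations on $H^0,H^1$ to force the middle inclusion to be a homotopy equivalence, which for surfaces with boundary upgrades to a diffeomorphism (following L\^e). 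That sandwich argument is designed precisely to avoid the transversality problem your interpolation creates.

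The gap is the claim that $h^{-1}(t)$ is transverse to $\Sigma_s=N_s^{-1}(\epsilon_0^2)$ for \emph{every} $s\in[0,1]$. The justification you propose --- that the tangency locus cannot accumulate at $\{s\}\times\{0\}$ by curve selection --- does not address the actual difficulty: for fixed $\epsilon_0$ the hypersurfaces $\Sigma_s$ all lie in a compact annulus $\{r_1\le\norm{x}\le r_0\}$ bounded away from the origin, so a tangency between $h^{-1}(t)$ and some $\Sigma_s$ with $0<s<1$ would occur at a definite distance from $0$, and non-accumulation at the origin says nothing about it. At the two endpoints there are genuine arguments: for $s=1$ transversality to round spheres is Milnor's classical lemma, and for $s=0$ the paper obtains it because points of $Y_t\cap\Sigma_{\epsilon_0}$ map to $L_t\cap\p D$, which avoids $\VVV$, so $G$ is a local diffeomorphism there and transversality is pulled back from the target. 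Neither argument applies to an intermediate $N_s$. Moreover, if you try to run curve selection on the family tangency locus in $[0,1]\times W$ along a path $(\sigma(\tau),\gamma(\tau))$, the derivative of $N_{\sigma(\tau)}(\gamma(\tau))$ acquires the extra term $\sigma'(\tau)\bigl(\norm{\gamma}^2-\norm{G(\gamma)}^2\bigr)$, which destroys the ``$N$ constant along the path'' contradiction that powers both Milnor's argument and Lemma \ref{isotopy}, and it is not clear how to control it. To repair the proof you would need either an honest transversality statement for the whole interpolated family (which does not follow from the tools in the paper), or to abandon the interpolation in favor of the chain-of-inclusions argument the paper actually uses.
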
 
\begin{proof} In the proof of lemma \ref{isotopy} we may choose if necessary a smaller $r_0$ to guarantee that the standard ball $B'_{r_0}=\Big\{\lv u \\ v\rv\mid \Big\| \lv u \\ v\rv\Big\|\leq r_0\Big\}$ is a Milnor ball which means 
that $X_0$ is transverse to the standard sphere $\p B'_{r}$ for each $r\in ]0,r_0]$ and the Milnor  fiber is by definition $X_t\cap B'_{r_0}$ for $0<|t|\leq \eta_0$, with $\eta_0$ small enough. By this very definition $B'_r$ is also a Milnor ball and $X_t\cap B'_r$ a Milnor fiber provided that we restrict the condition on $t$ to $0<|t|\leq \eta$ for an appropriate $\eta<\eta_0$. In fact for such a $t$ the inclusion $X_t\cap B'_r\subset X_t\cap B'_{r_0}$ yields a deformation retract between two diffeomorphic varieties. Now we have the inclusion $B_{\epsilon_0}\subset B'_{r_0}$ and choosing $r$ small enough to get $B'_r\subset B_{\epsilon_0}$ we can perform again the construction of lemma \ref{isotopy} and we get the chain of inclusions: 
\begin{equation}\label{1}
B_{\epsilon}\subset B'_r\subset B_{\epsilon_0}\subset B'_{r_0}.
\end{equation}
Let us choose $\eta _0$ small 
enough both for the validity of the Milnor fibration and for the transversality of the intersections  $L_t\cap \p D$ as described above, with $D$ of radius ${\epsilon_0}$. We have to notice also that $L_0$ is transverse to $D_{\epsilon}$ for all $\epsilon\leq \epsilon _0$. Then at any point $y\in \p X_t=Y_t\cap \Sigma_{\epsilon_0}$, the two varieties $Y_t$ and $\Sigma_{\epsilon_0}$ are also transversal, and so are $Y_0$ and $\Sigma_{\epsilon}$ for $0<\epsilon\leq \epsilon_0$. Indeed at such a point $y$ we have avoided $\VVV$ and the map $G$ is a local diffeomorphism. 

Because of these transversalities we can construct Milnor fibrations with Milnor fiber $Y_t\cap B_\epsilon$ using "pseudo Milnor balls" $B_\epsilon$ which make a basis of neighborhoods of $0$. The arguments are exactly the same as with the standard Milnor fibration. It is known (see \cite{Le}, Theorem 3.3)  that this Milnor fiber is diffeomorphic to the standard one. The proof uses the chain of inclusions (\ref{1}). Indeed we choose $t$ small enough for the intersections of $Y_t$ with the four terms in  (\ref{1}), to be Milnor fibers. The two inclusions $Y_t\cap B_\epsilon\subset Y_t\cap B_{\epsilon_0}$ and $Y_t\cap D_r \subset Y_t\cap D_{r_0}$ are homotopy equivalences. Therefore, in the sequence of maps 
\[
\xymatrix{H^i(Y_t\cap B_\epsilon)\ar[r]^{\alpha_1}&H^i(Y_t\cap D_r)\ar[r]^{\alpha_2}&H^i(Y_t\cap B_{\epsilon_0})\ar[r]^{\alpha_3}&H^i(Y_t\cap D_{r_0})}
\]
$\alpha_3\circ \alpha_2$ and $\alpha_2\circ \alpha_1$ are isomorphisms and this forces the middle arrow to be an isomorphism for $i=0,1$. Since we work on surfaces with boundaries this is enough to obtain that they are diffeomorphic.
\qed 
\end{proof}
\begin{proposition} The surface $X_t$ is connected and $\chi(X_t)= 2m-m^2$. Furthermore its boundary has $m$ connected components and its genus is $g(X_t)=\frac{(m-1)(m-2)}2$.
\end{proposition}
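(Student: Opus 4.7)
The plan is to observe that $F(u,v) := \ell(G_1,G_2)(u,v) = a\bigl(p(u)^m - v^m\bigr) + b\bigl(-u^m + \overline p(v)^m\bigr)$ splits as a sum of a function of $u$ alone and a function of $v$ alone, namely
\[
F(u,v) = A(u) + B(v), \quad A(u) := a\,p(u)^m - b\,u^m, \quad B(v) := b\,\overline p(v)^m - a\,v^m.
\]
For generic linear form $\ell$ (that is, $a\neq b$ and $a,b\neq 0$), expanding $p(u)^m = u^m + i m u^{m+1}+\cdots$ and $\overline p(v)^m = v^m - i m v^{m+1}+\cdots$ shows that $A(u) = (a-b)u^m+\cdots$ and $B(v) = (b-a)v^m+\cdots$ each have order exactly $m$ at $0$. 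The leading form of $F$ is thus $(a-b)(u^m-v^m)$.

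Next I would compute the Milnor number of $F$. Since $F_u = A'(u)$ and $F_v = B'(v)$, and since $A'(u) = m(a-b)u^{m-1}(1 + O(u))$ and $B'(v) = m(b-a)v^{m-1}(1+O(v))$ are each equal to a power times a unit in $\mathbb C\{u\}$ and $\mathbb C\{v\}$ respectively, the Jacobian ideal in $\mathbb C\{u,v\}$ equals $(u^{m-1},v^{m-1})$, so
\[
\mu(F,0) = \dim_{\mathbb C}\frac{\mathbb C\{u,v\}}{(u^{m-1},v^{m-1})} = (m-1)^2.
\]
In particular $F$ has an isolated singularity at $0$, so by the preceding proposition $X_t$ is diffeomorphic to the standard Milnor fibre of $F$. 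Milnor's theorem says this fibre has the homotopy type of a wedge of $\mu$ circles, hence is connected and has
\[
\chi(X_t) = 1 - \mu(F,0) = 1 - (m-1)^2 = 2m - m^2.
\]

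To count boundary components, I would use that $\partial X_t$ is isotopic to the link of $F=0$ in $S^3_\varepsilon$, whose connected components are in bijection with the analytic branches of $F=0$ at $0$. Since the tangent cone $(a-b)(u^m-v^m) = (a-b)\prod_{\zeta^m=1}(u-\zeta v)$ consists of $m$ pairwise distinct linear factors, the classical splitting of a plane curve singularity along its reduced tangent cone (Hensel/Newton–Puiseux) yields exactly $m$ smooth branches, one tangent to each line. Therefore $\partial X_t$ has $k=m$ connected components. Finally, from $\chi(X_t) = 2 - 2g - k$ with $\chi=2m-m^2$ and $k=m$ I get $2g = m^2-3m+2$, whence $g(X_t) = \tfrac{(m-1)(m-2)}{2}$.

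The key technical step, and the one that makes everything work, is the splitting $F = A(u)+B(v)$; without it, the Milnor number of $\ell\circ G$ would require a Kouchnirenko-type Newton polytope argument. The only point needing care is to check that a generic choice of $\ell$ (i.e.\ $a\neq b$, both nonzero) indeed keeps both $A$ and $B$ of order $m$ and makes the tangent cone reduced — both hold for all $\ell$ outside a proper linear subvariety, so one may arrange this once and for all at the start.
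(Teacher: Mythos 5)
Your proof is correct and follows essentially the same route as the paper: both exploit the additive splitting of $\ell\circ G$ into a function of $u$ of order $m$ plus a function of $v$ of order $m$, deduce $\mu(\ell\circ G)=(m-1)^2$ from the Jacobian ideal $(u^{m-1},v^{m-1})$, identify the $m$ boundary components with the $m$ smooth branches of the zero curve, and recover the genus from $\chi=2-2g-k$. The only cosmetic difference is that the paper first normalizes $L$ to $U^m-V^m$ by a diagonal change of coordinates and reads everything off that explicit form, whereas you compute the Jacobian ideal and count branches via the reduced tangent cone directly.
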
 
 Proof. Since $X_t$ is a smooth real surface with boundary its Euler characteristic is $\chi(X_t)=1-\dim(H^1(X_t),\C)$ because it is connected by \cite{Mi}. The first statement in the proposition is equivalent to the fact that the Milnor number $\mu(\ell\circ G)=\dim(H^1(X_t),\C)$ $(m-1)^2$. To check this fact recall that $\mu(\ell\circ G)$ is an analytic invariant (and even a topological one) of the function.
  Let us calculate a standard form up to an analytic change of coordinates, for $L:=\ell(G_1,G_2)$:
\begin{eqnarray*}
 &L(u,v)&=a(p(u)-v^m)+b(-u^m+\overline p (v)\\
& &=(a-b)u^m(1+O(u))- (a-b)v^m(1+O(v))=U^m-V^m\\
 \end{eqnarray*} where $\Phi:\lv u\\ v\rv \mapsto \lv \varphi(u)\\ \psi(v)\rv $ is a diagonal change of coordinates. We can now check that $\mu(\ell(G_1,G_2))=(m-1)^2$ by the formula for the Milnor number as the codimension of the Jacobian ideal : $\mu(L)=\dim_\C\C\{u,v\}/(\frac{\p L}{\p u},\frac{\p L}{\p v})=\dim_\C\C\{U,V\}/(U^{m-1},V^{m-1})$.
 The last statement follows since the number of components of the boundary is the number of irreducible local components of the curve $L(u,v)=0$.
 \qed
 
Now we are ready to finish the proof of theorem \ref{key-relation}. We already know that $F: X_t \to \Delta_t$ is a ramified cover of degree $\mu$ with $j+2m$ critical values and that above each critical value there is exactly one  critical point. 

 By the proof of  \ref{local} we know that the germ of the map $F$, at a critical point different from $(0,0)$, is up to analytic changes of coordinates,  equivalent to one of the two germs $(z_1,z_2)\to (z_1,z_2^2)$ or $(z_1,z_2)\to (z_1,z_2^m)$. 
Since the disc $\Delta_t$ is transversal to the critical value curve, we deduce  that for $F: X_t \to \Delta_t$  the critical points  are simple on the smooth branch, and of local multiplicity $m$ (therefore counts as $m-1$ critical points), above the fantom curves.

By Riemann-Hurwitz, $\chi(X_t)+\#\{\text{critical points}\}=\mu\chi(\Delta)=\mu$.
So $1-(m-1)^2+(j+2m(m-1) )=\mu$.  That is $\mu=j+m^2$. \qed

Combining with Lemma \ref{relation}, in which we plug in $b=i$, $\mu(f_{-b^2},0)=\mu(f,0)$ we get:
\REFCOR{with regular} For $m\ge 1$,  $f(z)=(z+iz^2+O(z^3))-\zbar$ and $g(z)=(z+iz^2+O(z^3))^m-\zbar^m$, the four quantities $j(f),\mu(f), j(g), \mu(g)$ at $0$ are related as follows:
$$\mu(g)=\mu(f)+m^2+m-2 ,\quad  j(g)=\mu(g)-m^2=j(f)+m-1=\mu(f)+m-2.$$
In particular each of these number determines the three other ones.
\ENDCOR

\section{Topological models for harmonic smooth critical points}
 
Notice that due to the equality $\mu=j+m^2$, the integers $\mu$ and $m+j$ have the same parity. 
In this section we will reformulate Lyzzaik's topological model in terms of the parity of $m+j$. We provide a self-contained proof.

We then show examples of harmonic maps with prescribed numerical invariants or with prescribed local models.

\subsection{Local models}

\REFTHM{Lyzzaik2}(topological model, inspired by Lyzzaik, \cite{Ly-light}) Let $f$ be a harmonic map
with $z_0$ a smooth critical point. Set $m=Ord_{z_0}(f)$. Let $j$ be the integer so that the critical 
value curve $\beta$ at $z_0$ has the order-pair    $(j,j+1)$. Assume $j<\infty$.

In this case, define $n^\pm$ by the following table:
\REFEQN{n-plus}\begin{array}{|c|| c|c|}\hline  \lv \mystrut 2n^+-1\\ 2n^--1\rv  &\mystrut  \text{ $\beta$ convex ($m\le j$ odd)} &  \text{$\beta$ cusp ($m\le j$ even)} \\ \hline\hline
m\text{ odd}  & \lv m \\ m \rv & \lv m+2 \\ m \rv \quad \text{or}\quad {\lv m  \\ m+2 \rv}  \\ \hline
m\text{ even}  & \lv m+1 \\ m-1 \rv \quad \text{or}\quad {\lv m-1  \\ m+1 \rv} & \lv m+1 \\ m+1 \rv 
\\ \hline \end{array}\ENDEQN

Set $R_{n^+, n^-}(z)= R_{n^+, n^-}(re^{i\theta})=\left\{\begin{array}{ll} re^{i(2n^+-1)\theta} & 0\le \theta\le \pi\\
re^{-i(2n^- -1)\theta} & \pi\le  \theta\le 2\pi\ .
\end{array}\right.$

Then  one of the choices of $R_{n^+, n^-}(z)$ (the choice is unique if $m+j$ is odd) is a local topological model of $f$, in the following sense: There is a neighborhood
$U$ of $0$, two orientation preserving homeomorphisms:
$h_1: U\to \D, \ 0\mapsto 0, \quad h_2 : \C\to \C,\ 0\mapsto 0$, 
 such that
$$h_2\circ f\circ h_1^{-1}(z)=R_{n^+, n^-}(z).$$ 
Moreover  $\# f^{-1}(z)=n^++n^-$ or 
$n^+ + n^- -2$ depending on whether $z$ is in one sector or the other of $f(U)\smm \beta$.

\ENDTHM

Notice that only the parity but not the size of $j$ comes into account, and $n^+-n^-=0,1$ or $-1$.

\beginp By Lemma \ref{Smooth} we can assume $z_0=0$ and $f$ takes the form
$f(z)=p(z)+\overline{q(z)}$ with
$$p(z)=z^m+ bz^{m+1} + O(z^{m+2}),\quad q(z)=z^m,\quad |b|=  1.$$

In this case $\psi(z_0)=1$. 
From lemma \ref{curves}, we know that $t\mapsto \beta(t)$ is locally injective and the local shape of $\beta$
corresponds to that of $  u(t^j+it^{j+1})$. Therefore $\beta$ is a convex curve on one half plane if $j$ is odd and
is a cusp of the first kind tangent to $\R$ if $j$ is even, then has its tangent lines on the right. See Figure \ref{beta}.

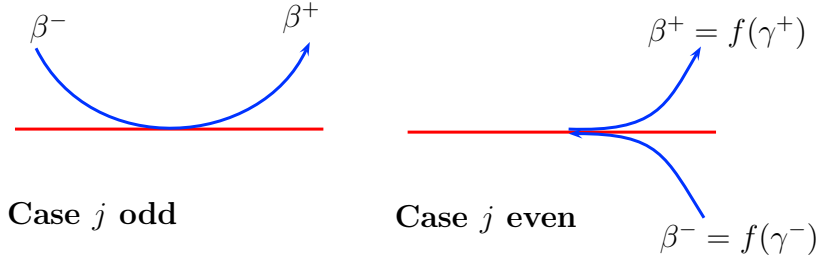
\begin{figure}\hspace{2cm}
\scalebox{1}
{
\begin{pspicture}(0,-1.6780468)(11.592149,1.6780468)
\definecolor{color215}{rgb}{0.0,0.2,1.0}
\psline[linewidth=0.04cm,linecolor=red](5.470254,-0.040351562)(9.570254,-0.040351562)
\psbezier[linewidth=0.04,linecolor=color215,arrowsize=0.05291667cm 2.0,arrowlength=1.4,arrowinset=0.4]{->}(7.610254,0.0)(8.770254,-0.040351562)(8.910254,0.29964843)(9.370254,1.0996485)
\usefont{T1}{ptm}{m}{n}
\rput(9.721709,1.2846484){$\beta^+=f(\gamma^+)$}
\usefont{T1}{ptm}{m}{n}
\rput(9.881709,-1.4553516){$\beta^-=f(\gamma^-)$}
\psbezier[linewidth=0.04,linecolor=color215,arrowsize=0.05291667cm 2.0,arrowlength=1.4,arrowinset=0.4]{<-}(7.610254,-0.06035156)(8.810254,-0.06035156)(8.890254,-0.34035155)(9.410254,-1.1803516)
\usefont{T1}{ptm}{m}{n}
\rput(6.5099807,-1.1953516){{\bf Case $j$ even}}
\psline[linewidth=0.04cm,linecolor=red](0.25025392,0.0)(4.350254,0.0)
\psbezier[linewidth=0.04,linecolor=color215,arrowsize=0.05291667cm 2.0,arrowlength=1.4,arrowinset=0.4]{->}(0.5302539,1.0754099)(1.2702539,-0.42035156)(3.510254,-0.25830027)(4.1702538,1.1596484)
\usefont{T1}{ptm}{m}{n}
\rput(4.061709,1.4846485){$\beta^+$}
\usefont{T1}{ptm}{m}{n}
\rput(0.701709,1.3446485){$\beta^-$}
\usefont{T1}{ptm}{m}{n}
\rput(1.2799804,-1.1553515){{\bf Case $j$ odd}}
\end{pspicture} 
}
\caption{The shape of the critical value curve}\label{beta}

\end{figure}

Write $f(z)=p(z)-q(z) +2 \Re q(z) = b(\kappa(z))^{m+1} + 2 \Re q(z)$ with $\kappa$ a holomorphic map tangent
to the identity at $0$. We may take $\kappa(z)$ as coordinate and transform $f$ into the following holomorphic+real normal form
 \REFEQN{Real-translation} f(z) =e^{i\theta}z^{m+1} + r(z)= F(z)+ r(z)\quad\text{with}\  F(z)=e^{i\theta}z^{m+1},  r(z)=2\Re(z^m+O(z^{m+1})).
\ENDEQN

Claim 0. In this form the critical value curve $\beta$ is either a convex curve on one half plane or
is a cusp of the first kind tangent to $\R$.

Proof. We have only changed the variable in the source plane. So this new normal form has the same critical value curve as before.  

Claim 1. We give here a specific proof to be compared to lemma \ref{isotopy}. For a small enough round circle  $C=\{|z|=s\}$ in the range,   its preimage by $f$ contains a Jordan curve connected component bounding a neighborhood $U$ of $0$, with $f(U)\subset D_s$ (not necessarily equal) and $f:U\to D_s$ proper (see Figure \ref{U}).

Notice that the tangent of $\g$ at $0$ depends on the choice of $\theta$ in $b=e^{i\theta}$, whereas the tangent of $\beta$
at $0$ does not depend on $\theta$.

\begin{figure}\hspace{2cm}
\scalebox{1} {
\begin{pspicture}(0,-2.2791991)(10.141016,2.299199)
\psline[linewidth=0.04cm,linecolor=red](0.26101562,-1.1991992)(3.7010157,0.9208008)
\psline[linewidth=0.04cm,linecolor=red](1.9010156,1.8408008)(1.9210156,-2.2591991)
\psline[linewidth=0.04cm,linecolor=red](0.12101562,0.84080076)(3.7410157,-1.1991992)
\psbezier[linewidth=0.04,arrowsize=0.05291667cm 2.0,arrowlength=1.4,arrowinset=0.4]{<-}(2.5210156,1.8208008)(2.5210156,1.0208008)(1.3810157,-1.5591992)(0.9810156,-1.7791992)
\psline[linewidth=0.04cm,linecolor=red](6.0210156,-0.2791992)(10.121016,-0.2791992)
\usefont{T1}{ptm}{m}{n}
\rput(2.7824707,2.1058009){$\gamma^+$}
\usefont{T1}{ptm}{m}{n}
\rput(0.8624707,-2.0541992){$\gamma^-$}
\psdots[dotsize=0.12](8.121016,-0.25919923)
\pscircle[linewidth=0.04,dimen=outer](8.141016,-0.23919922){1.3}
\psbezier[linewidth=0.04](1.4410156,1.3408008)(2.4029737,1.6139978)(3.8369179,0.49721578)(3.5810156,-0.47919923)(3.3251133,-1.4556142)(1.5886531,-1.8633064)(0.84101564,-1.1991992)(0.093378216,-0.53509206)(0.47905752,1.0676037)(1.4410156,1.3408008)
\usefont{T1}{ptm}{m}{n}
\rput(0.7924707,-0.05419922){$U$}
\usefont{T1}{ptm}{m}{n}
\rput(9.662471,-0.8341992){$C$}
\psbezier[linewidth=0.04,arrowsize=0.05291667cm 2.0,arrowlength=1.4,arrowinset=0.4]{->}(4.2210155,0.5608008)(4.2610154,0.6808008)(5.4410157,1.0008007)(6.0210156,0.42080078)
\usefont{T1}{ptm}{m}{n}
\rput(5.102471,1.0658008){$f$}
\end{pspicture} 
}
\caption{The domain $U$ and $F^{-1}(\R)$}\label{U}
\end{figure}
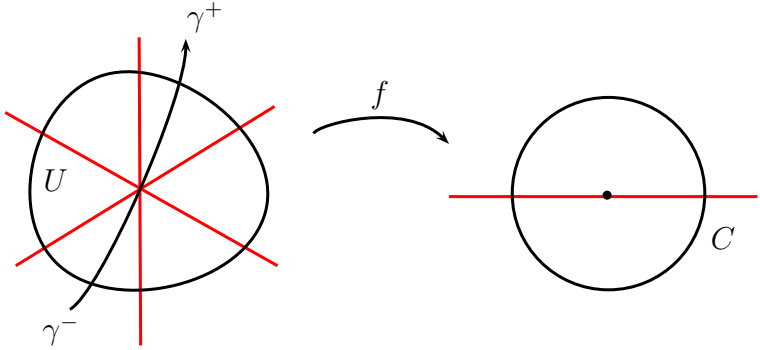

Proof.  
By assumption on $j<\infty$ the point $0$ is an isolated point in $f^{-1}(0)$.
So there is $r>0$ such that $\{|z|\le r\}$  is contained in the domain of definition $\Omega$ of $f$ and $0\notin f(\{|z|=r\})$. 

There is therefore a small round open disc $D$ centred at $0$ in the range such that $D\cap f(\{|z|=r\})=\emptyset$.

Let $W$ be an open connected subset of  $D$ containing $0$.

As $f$ is continuous $f^{-1}(W)$ is open in $\Omega$. Let $V$ be the connected component of $f^{-1}(W)$ containing $0$.
Then $V$ is an open neighborhood of $0$ with $V\subset \{|z|<r\}\subset\subset \Omega$.

We now claim that $f|_V:  V\to W$ is proper.

Proof. Let $V\ni z_n\to z\in \partial V$. We need to show $f(z_n)\to \partial W$. 
As $z\in \partial V\subset \Omega$ the map $f$ is defined and continuous at $z$. It follows that
$W\ni f(z_n)\to f(z)\in \overline W=W\sqcup \partial W$. If  $f(z)\in W$, then by continuity
$f$ maps a small disc neighborhood $B$ of $z$ into $W$, consequently $$B\cup V \text{ is }
\left\{\begin{array}{l} \text{connected}\\
        \text{strictly larger than $V$, and }\\
        \text{a subset of 
$ f^{-1}(W)$.}
       \end{array}\right.$$
 This contradicts the choice of  $V$ as a connected component of $f^{-1}(W)$ and
 ends the proof of the claim. 
 We now choose $W$ a small enough disc such that $t\to |\beta(t)|$ is strictly increasing  (resp. decreasing) as along as $t>0$ (resp. $t<0$) and $\beta(t)\in W$ and consider the proper map $f:=f|_V: V\to W$. 

Fix now $C=\{|z|=s\}$ contained in $W$ in the range.
The map $f$ is a local homeomorphism at every point of $f^{-1}C\smm \g$. Due to the local fold model 
at points of $\g^*$ we may conclude that $f^{-1}C$ is a 1-dimensional topological manifold, which is actually piecewise smooth. It is also compact by properness, so has only finitely many components, each is a Jordan curve. 

Let $I$ be an island, i.e. an open Jordan domain in $V$ bounded by a curve in $f^{-1}C$ . We claim that $f(I)\subset D_s:=\{|z|<s\}$.

Assume  $f(I)\smm \overline D_s\ne \emptyset$ . Then $|f|$ on the compact set $\overline I$ reaches its maximum at an interior point $x\in I$.
Then $x$ can not be outside $ \g$ as $f$ is locally open outside $\g$. But if $x\in \g$ then $f(x)\in \beta$ 
and $|f|$ restricted to $\g$ can not reach a local maximum since $|\beta(t)|$ is locally monotone. This is not possible.

So $f(I)\subset \overline D_s$. But if for some $x\in I$ we have $f(x)\in C=\partial D_s$, then $I$ contains a  component (so a Jordan curve) of $f^{-1}C$. Choose a point $x'$ in this curve but disjoint from $\g$. Then $f$ is a local homeomorphism
on a small disc $B$ centred at $x'$ with $B\subset I$ and $f(B)$ contains points outside $\overline D_s$. This is not possible by the previous paragraph. So we may conclude that $f(I)\subset D_s$. 

We claim now $0\in I$. Otherwise $0\not\in f(I)$ and we may argue as above using the minimum of $|f|$ on $\overline I$ to 
reach a contradiction.

It follows that $f^{-1}C$ has only one component in $V$ bounding a Jordan domain $U$ containing $0$ and $f(U)\subset D_s$. As $f$ maps the boundary into the boundary (not necessarily onto), $f: U\to D_s$ is proper.

Claim 2. The set $F^{-1}\R^*$ is a regular star of $2(m+1)$   radial branches from $0$ to $\infty$ and $F^{-1}\R\cap U$ is connected (see Figure \ref{U}).

Otherwise there is a segment $L\subset F^{-1}\R^*$ connecting two boundary points of $U$. As $f(s)=F(s)+r(s)$ with $r$ real, $f(L)\subset \R$. But $f^{-1}(0)=0$. So $f( L)$ is a segment in $\R^*$ by Intermediate Value Theorem. Now as $f$ has no turning points (critical points) in $ L$,  it maps $ L$ bijectively onto a 
real segment with constant sign, and the two ends are in $f(\partial U)=C$. This contradicts the choice that $C$ is a round circle.

Claim 3.  Each sector $S$ of $U\smm F^{-1}\R$ is mapped by both $f$ and $F$ into the same upper half plane.
Each branch $\ell$ of  $F^{-1}\R^*$ is mapped by $f$ to a real segment with constant sign (but not necessarily equal
to the sign of $F(\ell)$).
Two  consecutive branches on the same side of $\g$ have  images under $f$ with opposite signs, and
two consecutive branches separated by $\g$ have  images under $f$ with the same sign.

Proof. As $f(s)=F(s)+r(s)$ with $r$ real, and $F(S)$ is either on the upper or lower half plane, the same is true for $f(S)$ with the same imaginary sign.

 The fact that $F(\ell)\subset \R^*$ implies $f(\ell)\subset \R$.
But $f^{-1}(0)=0$. So $f(\ell)$ is a segment in $\R^*$ by Intermediate Value Theorem. Now as $f$ has no turning points (critical points) in $\ell$,  it maps $\ell$ bijectively onto a 
real segment with constant sign.

We now prove by contradiction that two consecutive branches on the same side of $\g$ have  images under $f$ with opposite signs.
Let $W$ be a small closed sector neighborhood of $0$ bounded by two consecutive branches on the same side of $\g$
and a small arc $\al$. Assume $f$ maps the two branches to the same segment in $\R$, say $[0,\ep]$.
As $W\cap f^{-1}\R^*=W\cap F^{-1}\R^*=\emptyset$, the connected set $f(W)$ is disjoint from $\R^-$. And $f(\al)$ is disjoint
from $0$.  Since $f(W)$ is not entirely contained in $\R^+$, one can find $v\in \partial f(W)\smm \Big(f(\al)\cup \R^+\cup \{0\}\Big)$.
So $v=f(w)$ for some interior point $w$ of $W$. This contradicts that $f$ is a local homeomorphism.

We may prove similarly that two  consecutive branches of $F^{-1}\R^*$ separated by $\g$ have  images under $f$ with the same sign, using the fact that $f$ realises a fold along $\g^*$.

Claim 4. Let  $S$ be a sector of $U\smm F^{-1}\R$ disjoint from $\g$. Then $f$ maps $S$ homeomorphically onto
one of the half discs $\{|z|<s, \Im z >0\}, \{|z|<s, \Im z <0\}$, and  in $S$   the  number of branches of $f^{-1}(f(\g))$ is equal 
to the number of branches of $F^{-1}(F(\g))$ (see Figure \ref{co-critical}).

Proof. The previous claim says that $f$ is a local homeomorphism on $S$, and $f(S)$ is contained
in one of the half discs, say $\{|z|<s, \Im z >0\}$. We also know that $f:S\to \{|z|<s, \Im z >0\}$ is proper, so is in fact a covering. As $S$ is simply connected, we conclude that $f$ on $S$ is a homeomorphism onto
its image. We also need to prove that $f(S)$ is one of the half discs bounded by $C\cup \R$.  

For $t\in ]0,\ep[$, set $\g^\pm(t)=\g(\pm t)$. Consider 
 $\de^\pm(t)=F(\g^\pm( t))$ and $\beta^\pm(t)=f(\g^\pm(t))$, 

By \Ref{Real-translation} we know that $\de^-(t)$ and $\beta^-(t)$ are in the same half plane of $\C\smm \R$,
idem for the pair $\de^+(t)$ and $\beta^+(t)$. Comparing with the shape of $\beta$ relative to $\R$ 
we know that
$\de^\pm(t)$ are in the same half plane if $\beta$ is convex and in opposite half planes otherwise.

Claim 5. The map $f$ sends each  $S$ of the two sectors  of $U\smm F^{-1}\R$ intersecting  $\g$  onto
one small sector  $\chi$ with $0$ angle at $0$  of  $\C\smm (C\cup\beta\cup \R)$, and $S\cap f^{-1}(\beta)\subset \g$ (see Figure \ref{folding-side}).

\begin{figure}\hspace{2cm}
\scalebox{1} 
{
\begin{pspicture}(0,-4.859199)(11.041895,4.879199)
\definecolor{color2287}{rgb}{0.0,0.2,1.0}
\definecolor{color2854b}{rgb}{0.8,0.8,0.8}
\psline[linewidth=0.04cm,linecolor=red,linestyle=dotted,dotsep=0.16cm](0.0,2.460801)(4.92,2.4408007)
\psline[linewidth=0.04cm,linecolor=red,linestyle=dotted,dotsep=0.16cm](2.36,4.4408007)(2.38,0.3408008)
\psline[linewidth=0.04cm,linecolor=red,linestyle=dotted,dotsep=0.16cm](0.66,0.9208008)(4.18,4.0608006)
\psline[linewidth=0.04cm,linecolor=red,linestyle=dotted,dotsep=0.16cm](0.82,4.0408006)(3.98,0.8008008)
\psbezier[linewidth=0.04,arrowsize=0.05291667cm 2.0,arrowlength=1.4,arrowinset=0.4]{<-}(2.96,4.4408007)(2.96,3.6408007)(1.82,1.0608008)(1.42,0.84080076)
\psbezier[linewidth=0.04,linestyle=dashed,dash=0.16cm 0.16cm](4.45,2.0108008)(3.18,1.8608007)(1.62,2.6808007)(0.75,3.5108008)
\psbezier[linewidth=0.04,linestyle=dashed,dash=0.16cm 0.16cm](0.55,2.9308007)(1.35,2.9308007)(3.94,1.8008008)(4.15,1.3908008)
\psline[linewidth=0.04cm,linecolor=red,linestyle=dotted,dotsep=0.16cm](5.56,2.4408007)(9.66,2.4408007)
\psbezier[linewidth=0.04,linecolor=color2287,arrowsize=0.05291667cm 2.0,arrowlength=1.4,arrowinset=0.4]{->}(5.82,3.4976285)(6.56,2.0208008)(8.8,2.1808007)(9.46,3.5808008)
\usefont{T1}{ptm}{m}{n}
\rput(9.831455,3.7458007){$\beta^+$}
\usefont{T1}{ptm}{m}{n}
\rput(5.8114552,3.7058008){$\beta^-$}
\usefont{T1}{ptm}{m}{n}
\rput(3.201455,4.6858006){$\gamma^+$}
\usefont{T1}{ptm}{m}{n}
\rput(1.4814551,0.64580077){$\gamma^-$}
\usefont{T1}{ptm}{m}{n}
\rput(7.5497265,1.4658008){{\bf Case $m$ odd and $j$ odd}}
\psline[linewidth=0.04cm,linecolor=red,linestyle=dotted,dotsep=0.16cm](1.32,-3.7791991)(4.84,-1.5991992)
\psline[linewidth=0.04cm,linecolor=red,linestyle=dotted,dotsep=0.16cm](2.96,-0.7391992)(2.98,-4.839199)
\psline[linewidth=0.04cm,linecolor=red,linestyle=dotted,dotsep=0.16cm](1.16,-1.6991992)(4.8,-3.7791991)
\psbezier[linewidth=0.04,arrowsize=0.05291667cm 2.0,arrowlength=1.4,arrowinset=0.4]{<-}(4.62,-0.9591992)(4.42,-1.8391992)(2.48,-3.3191993)(1.18,-3.4591992)
\psbezier[linewidth=0.04,linestyle=dashed,dash=0.16cm 0.16cm](3.46,-4.699199)(3.1,-3.7791991)(3.2,-4.219199)(3.0,-2.799199)
\psbezier[linewidth=0.04,linestyle=dashed,dash=0.16cm 0.16cm](2.96,-2.7591991)(3.8380961,-3.381902)(3.913438,-3.3451452)(4.4,-4.279199)
\psline[linewidth=0.04cm,linecolor=red,linestyle=dotted,dotsep=0.16cm](6.02,-2.9591992)(10.12,-2.9591992)
\psbezier[linewidth=0.04,linecolor=color2287,arrowsize=0.05291667cm 2.0,arrowlength=1.4,arrowinset=0.4]{->}(6.0,-2.039199)(7.38,-3.3191993)(8.96,-3.2591991)(10.02,-1.8791993)
\usefont{T1}{ptm}{m}{n}
\rput(10.291455,-1.6341993){$\beta^+$}
\usefont{T1}{ptm}{m}{n}
\rput(5.911455,-1.8141992){$\beta^-$}
\usefont{T1}{ptm}{m}{n}
\rput(4.741455,-0.6741992){$\gamma^+$}
\usefont{T1}{ptm}{m}{n}
\rput(1.2214551,-3.0741992){$\gamma^-$}
\usefont{T1}{ptm}{m}{n}
\rput(8.039726,-4.0141993){{\bf Case $m$ even and $j$ odd}}
\psbezier[linewidth=0.04](2.36,4.180801)(3.36,4.200801)(3.34,4.320801)(3.82,3.8208008)
\psline[linewidth=0.04cm](2.64,4.140801)(3.36,3.4408007)
\psline[linewidth=0.04cm](2.4,3.9408007)(3.12,3.2408009)
\psline[linewidth=0.04cm](2.42,3.4408007)(2.88,3.0008008)
\psline[linewidth=0.04cm](2.42,3.0608008)(2.66,2.8408008)
\psline[linewidth=0.04cm](3.14,4.160801)(3.64,3.6608007)
\psbezier[linewidth=0.04](9.28,3.3008008)(9.44,3.1408007)(9.66,2.7408009)(9.46,2.460801)
\psline[linewidth=0.04cm](9.54,2.8008008)(9.24,2.4808009)
\psline[linewidth=0.04cm](9.5,3.0008008)(8.92,2.4808009)
\psline[linewidth=0.04cm](9.38,3.1808007)(8.58,2.460801)
\psline[linewidth=0.04cm](8.56,2.6808007)(8.32,2.460801)
\pscircle[linewidth=0.04,dimen=outer,fillstyle=solid,fillcolor=color2854b](1.72,1.2208008){0.2}
\psbezier[linewidth=0.04,fillstyle=solid,fillcolor=color2854b](6.02,3.1408007)(5.96,2.8008008)(6.08,2.700801)(6.26,2.9008007)
\usefont{T1}{ptm}{m}{n}
\rput(9.991455,3.0058007){$\chi$}
\end{pspicture} 
}
\caption{The folding sides for harmonic maps $f$}\label{folding-side}
\end{figure}
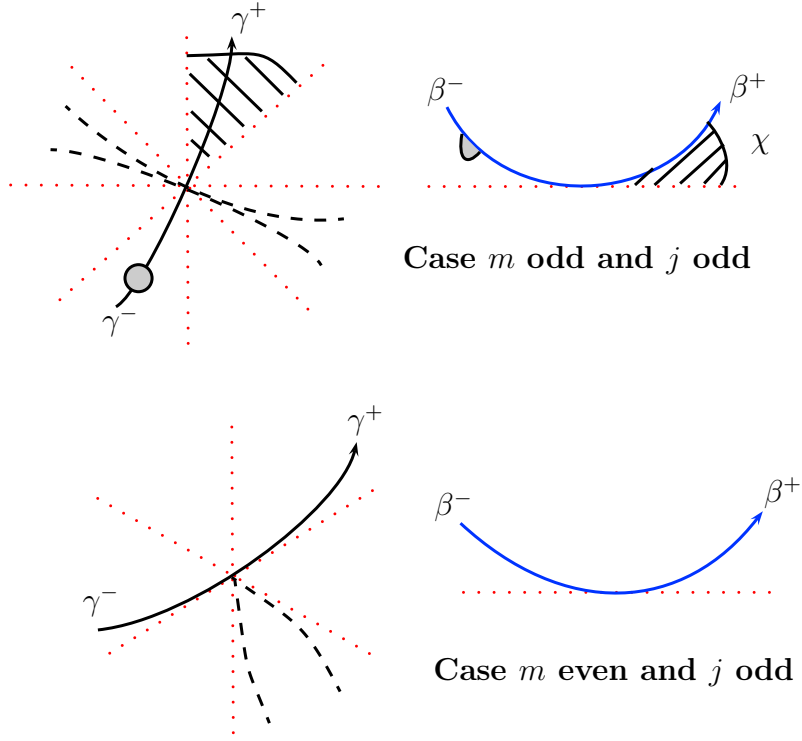

This is due to the harmonicness: $f$ folds a small neighborhood of $z\in \g^*$ onto a half neighborhood of
$f(z)$ on the concave side of $\beta$ (see Figure \ref{folding-side}). As $\chi$ does not contain the other branch of $\beta$, the preimage
$S$ contains no other co-critical points than $\g$. 

Claim 6. The critical curve $\g$ separates the  branches of $F^{-1}\R^*$ into two parts whose numbers depend on the shape of $\beta$, by the following table:

$$\begin{array}{|c||c|c|} \hline { \lv\#\text{right \mystrut branches of $F^{-1}\R^*$} \\ \#\text{left branches of $F^{-1}\R^*$}\rv}   & \text{$\beta$ convex}  &\text{$\beta$ cusp} \\ \hline\hline
m\text{ odd}
    & \text{equal number}  &   \text{differ by 2}  \\   \hline
m\text{ even} & 
  \text{differ by 2}   &\text{equal number} \\ \hline
\end{array} $$

Proof. For $t\in ]0,\ep[$, we have $\g^\pm(t)=\g(\pm t)$,
 $\de^\pm(t)=F(\g^\pm( t))$ and $\beta^\pm(t)=f(\g^\pm(t))$.
We need to know the relative positions between  $\de^\pm(t)$ and $\R$ in order to get the relative  positions
between $\g\subset F^{-1}(\de^\pm(t))$ and $F^{-1}\R^*$.

We know that $\de^\pm(t)$ are in the same half plane if $\beta$ is convex and in opposite half planes otherwise.

On the other hand, the two curves  $\g^\pm(t), t\in [0,\ep[$ make an angle $\pi$ at $\g(0)$.
As $F(z)= e^{i\theta}z^{m+1}$,  $$\text{angle}_0(
 \de^\pm(t))= (m+1)\cdot \text{angle}_0(\g^\pm( t))=(m+1)\pi\!\!\!\mod\!2\pi =\left\{\begin{array}{ll} 0 & \text{if $m$ is odd}\\
 \pi & \text{if $m$ is even.}\end{array}\right.$$
  
Now pullback these shapes by $F(z)=e^{i\theta} z^{m+1}$, we get the claim. See Figure \ref{cases}.

Claim 7. In any case, the number of sectors in $U\smm f^{-1}\beta$ is odd  in each side of $\g$. Denoting them by $2n^\pm-1$, with $+$ for the right-side of $\g$ and $-$ the left side, one can related them to the numbers of branches of $F^{-1}\R^*$ separated by $\g$ by: 
$$\begin{array}{|r|c|c|} \hline &\mystrut \text{$\beta$ convex, $j$ odd} &\text{$\beta$ cusp, $j$ even}\\ \hline
\lv \mystrut 2n^+-1\\ 2n^--1\rv = & \lv\#\text{right branches of $F^{-1}\R^*$}-1 \mystrut \\ \#\text{left branches of $F^{-1}\R^*$}-1\rv &  \lv\#\text{right \mystrut branches of $F^{-1}\R^*$} \\ \#\text{left branches of $F^{-1}\R^*$}\rv \\
\hline
\end{array} $$

Proof. The shape of $\beta$ is determined by the parity of $j$ in its order-pair  $(j,j+1)$: If $j$ is odd then $\beta$ is convex, 
if $j$ is even then $\beta$ is a cusp. In the following only the shape of $\beta$ is relevant, but not the value of $j$.
It follows that if $m+j$ is odd, $F^{-1}\R$ contains the tangent line of $\g$ at $0$.

 See Figure \ref{co-critical}.

Now as the total number of branches of $F^{-1}\R^* $ is $2(m+1)$, we get, by Claim 6, 
$$\begin{array}{|c|c|c|} \hline { \lv\#\text{right \mystrut branches of $F^{-1}\R^*$} \\ \#\text{left branches of $F^{-1}\R^*$}\rv}   & \text{$\beta$ convex, $j$ odd}  &\text{$\beta$ cusp, $j$ even} \\ \hline 
m\text{ odd}
    & {\lv m+1\mystrut \\ m+1\rv}  & 
\lv m+2 \\ m \rv \quad \text{or}\quad {\lv m  \\ m+2 \rv}  \\   \hline
m\text{ even} & 
\lv m+2\mystrut \\ m \rv \quad \text{or}\quad {\lv m  \\ m+2 \rv}   & {\lv m+1 \\ m+1\rv} \\ \hline
\end{array} $$

We get \Ref{n-plus}. 

Claim 8. Now we forget about $F^{-1}\R$ and consider only the sectors in $U$ partitioned by $f^{-1}\beta$. The same arguments
as above show that $f$ maps each sector homeomorphically onto one of the two sectors in $D_s\smm \beta$ in the range. 

 To construct coordinate changes $h_1, h_2$ from $f$ to $R_{n^+, n^-}$, one proceeds as follows:

Define at first an orientation preserving  homeomorphisms  $h_2: \overline D_s\to \overline \D$ mapping $0$ to $0$ and $\beta\cap  \overline D_s$
onto $[-1,1]$.  Note that $R_{n^+, n^-}^{-1}[-1,1]$ partitions $\overline \D$ into the same number of sectors as the partition of $U$ by $f^{-1}\beta$. We just need now to construct $h_1$ sector on sector so that $R_{n^+, n^-}\circ h_1=h_2\circ f$ on that sector and
$h_1$ is an orientation preserving  
mapping from $\g\cap \overline D_s$ onto $[-1,1]$. We can see that  $h_1$ is a homeomorphism from $U$ to $\D$.
\qed

Notice that the local topological degree of $f$ can be expressed in the following table:
$$\begin{array}{|c||c|c|c|c|}\hline &\multicolumn{2}{|c|}{\beta(t)  \text{ convex, $m\le j$ odd} }&\multicolumn{2}{|c|}{ \beta(t) \text{ cusp, $m\le j$ even} }\\ \hline
&m \text{ odd}&m \text{ even} &m \text{ odd}  &m \text{ even} \\ \hline
 f_{z=0}\sim \left(\!\!\begin{array}{c} \mystrut z^{2n^+ -1}\\\barz^{2n^- -1}  
\end{array}\!\!\right) &  \left(\!\!\begin{array}{c}z^{ m}\\ \overline z^{m} \end{array}\!\!\right) &  \left(\!\!\begin{array}{c}z^{ {m+1} }\\  \barz^{m-1} \end{array}\!\!\right) 
 \text{ or }  \left(\!\!\begin{array}{c}z^{ m-1}\\  \barz^{{m+1}  } \end{array}\!\!\right)  \mystrut  & 
  \left(\!\!\begin{array}{c}z^{ m+2}\\ \overline z^{m} \end{array}\!\!\right) \text{ or } 
   \left(\!\!\begin{array}{c}z^{ m}\\ \overline z^{m+2} \end{array}\!\!\right) &   \left(\!\!\begin{array}{c}z^{ m+1}\\  \barz^{m+1 } \end{array}\!\!\right)\\ \hline 
   \mystrut \deg(f,0)= & 0 & \pm 1 & \pm 1 & 0\\ \hline
   \# f^{-1}(z) = & m+1, m-1 & m+1\mystrut, m-1 & m+2,m& m+2,m \\ \hline
   \mu(f,0)= &\multicolumn{4}{c|}{j+m^2}\\ \hline
\end{array}$$

\REFCOR{generic again} In the generic case $f(z)=(z+bz^2+O(z^3))^m-\zbar^m$ with $(-b^2)^m\ne 1$, we have
$$\begin{array}{|c||c|c|}\hline
&\beta(t)  \text{ convex, $m=j$ odd} &\beta(t) \text{ cusp, $m=j$ even}\\ \hline
f_{z=0}\sim  \left(\!\!\begin{array}{c} \mystrut z^{2n^+ -1}\\\barz^{2n^- -1}  
\end{array}\!\!\right) &  \left(\!\!\begin{array}{c}z^{ m}\\ \overline z^{m} \end{array}\!\!\right) & \left(\!\!\begin{array}{c}z^{ m+1}\\  \barz^{m+1 } \end{array}\!\!\right)\\ \hline 
   \mystrut \deg (f,0)= & 0 &  0\\ \hline
   \# f^{-1}(z) = & m+1, m-1 & m+2,m \\ \hline
   \mu(f,0)= &\multicolumn{2}{c|}{m+m^2} \\ \hline
 \end{array}$$
\ENDCOR

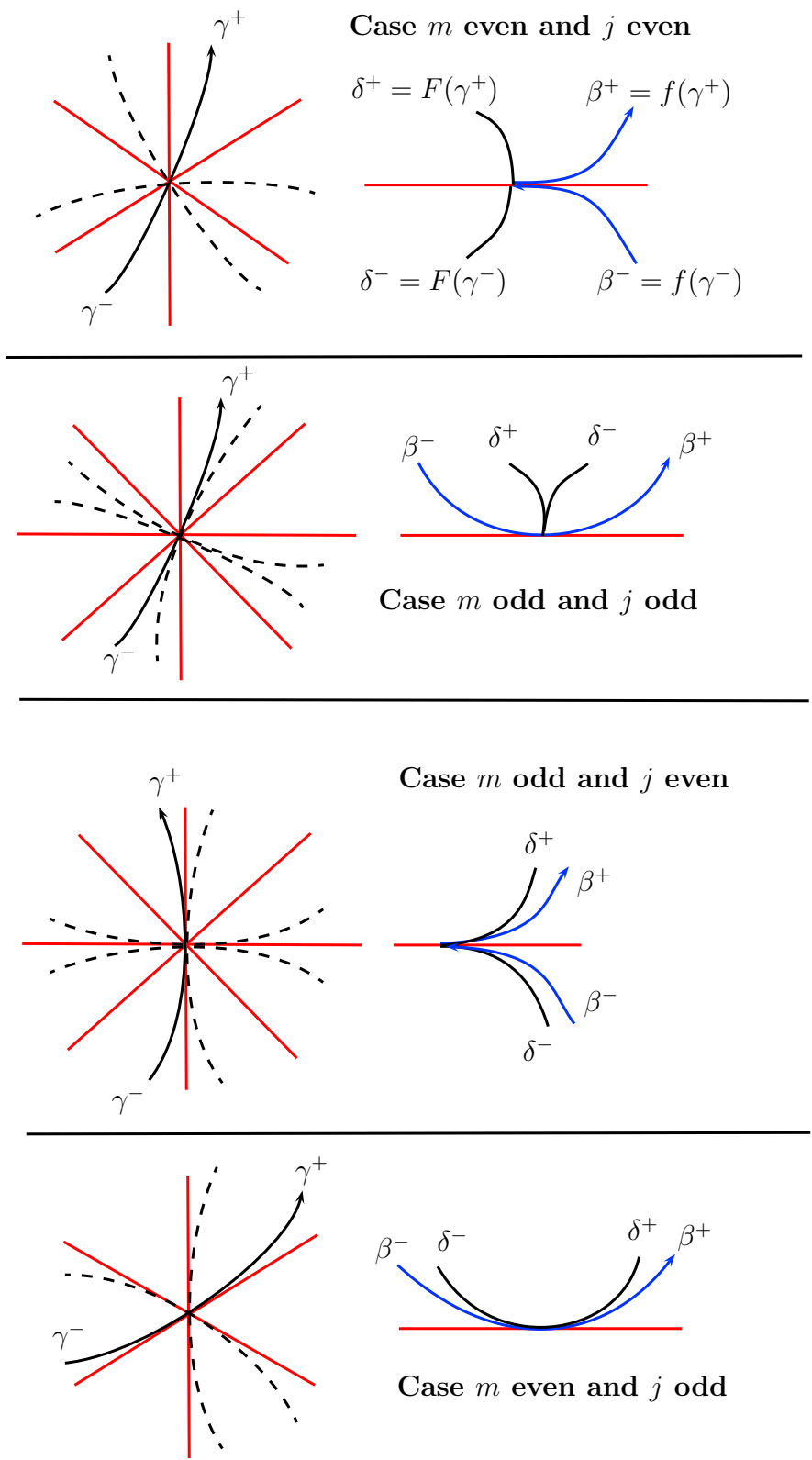
\begin{figure}\hspace{2cm}
\scalebox{1} 
{
\begin{pspicture}(0,-10.49)(11.68,10.51)
\definecolor{color1623}{rgb}{0.0,0.2,1.0}
\psline[linewidth=0.04cm,linecolor=red](0.72,7.01)(4.28,9.23)
\psline[linewidth=0.04cm,linecolor=red](2.36,10.05)(2.38,5.95)
\psline[linewidth=0.04cm,linecolor=red](0.7,9.21)(4.1,6.85)
\psbezier[linewidth=0.04,arrowsize=0.05291667cm 2.0,arrowlength=1.4,arrowinset=0.4]{<-}(2.98,10.03)(2.98,9.23)(1.84,6.65)(1.44,6.43)
\psbezier[linewidth=0.04,linestyle=dashed,dash=0.16cm 0.16cm](3.7,6.47)(3.18,6.69)(1.66,8.87)(1.5,9.87)
\psbezier[linewidth=0.04,linestyle=dashed,dash=0.16cm 0.16cm](0.44,7.53)(1.68,8.17)(3.9,8.09)(4.48,7.87)
\psline[linewidth=0.04cm,linecolor=red](5.2,7.99)(9.3,7.99)
\psbezier[linewidth=0.04,linecolor=color1623,arrowsize=0.05291667cm 2.0,arrowlength=1.4,arrowinset=0.4]{->}(7.34,8.03)(8.5,7.99)(8.64,8.33)(9.1,9.13)
\psbezier[linewidth=0.04](7.355484,7.983008)(7.34,8.883008)(6.984516,8.95)(6.82,9.05)
\psbezier[linewidth=0.04](7.32,7.97)(7.26,7.29)(7.02,7.23)(6.68,6.93)
\usefont{T1}{ptm}{m}{n}
\rput(9.47,9.315){$\beta^+=f(\gamma^+)$}
\usefont{T1}{ptm}{m}{n}
\rput(9.63,6.575){$\beta^-=f(\gamma^-)$}
\usefont{T1}{ptm}{m}{n}
\rput(6.08,9.355){$\delta^+=F(\gamma^+)$}
\usefont{T1}{ptm}{m}{n}
\rput(6.2,6.575){$\delta^-=F(\gamma^-)$}
\usefont{T1}{ptm}{m}{n}
\rput(3.26,10.315){$\gamma^+$}
\usefont{T1}{ptm}{m}{n}
\rput(1.34,6.155){$\gamma^-$}
\psbezier[linewidth=0.04,linecolor=color1623,arrowsize=0.05291667cm 2.0,arrowlength=1.4,arrowinset=0.4]{<-}(7.34,7.97)(8.54,7.97)(8.62,7.69)(9.14,6.85)
\usefont{T1}{ptm}{m}{n}
\rput(7.46,10.275){{\bf Case $m$ even and $j$ even}}
\psline[linewidth=0.04cm,linecolor=red](0.16,2.93)(5.08,2.91)
\psline[linewidth=0.04cm,linecolor=red](2.52,4.91)(2.54,0.81)
\psline[linewidth=0.04cm,linecolor=red](0.82,1.39)(4.34,4.53)
\psline[linewidth=0.04cm,linecolor=red](0.98,4.51)(4.14,1.27)
\psbezier[linewidth=0.04,arrowsize=0.05291667cm 2.0,arrowlength=1.4,arrowinset=0.4]{<-}(3.12,4.91)(3.12,4.11)(1.98,1.53)(1.58,1.31)
\psbezier[linewidth=0.04,linestyle=dashed,dash=0.16cm 0.16cm](2.2,1.09)(2.08,2.35)(2.9,3.85)(3.7,4.79)
\psbezier[linewidth=0.04,linestyle=dashed,dash=0.16cm 0.16cm](4.61,2.48)(3.34,2.33)(1.78,3.15)(0.91,3.98)
\psbezier[linewidth=0.04,linestyle=dashed,dash=0.16cm 0.16cm](0.71,3.4)(1.51,3.4)(4.1,2.27)(4.31,1.86)
\psline[linewidth=0.04cm,linecolor=red](5.72,2.91)(9.82,2.91)
\psbezier[linewidth=0.04,linecolor=color1623,arrowsize=0.05291667cm 2.0,arrowlength=1.4,arrowinset=0.4]{->}(5.98,3.9668276)(6.72,2.49)(8.96,2.65)(9.62,4.05)
\psbezier[linewidth=0.04](7.78,2.9030082)(7.92,3.65)(7.44,3.83)(7.3,3.95)
\psbezier[linewidth=0.04](7.8,2.97)(7.9,3.7238462)(8.1,3.686154)(8.44,3.95)
\usefont{T1}{ptm}{m}{n}
\rput(10.01,4.215){$\beta^+$}
\usefont{T1}{ptm}{m}{n}
\rput(5.99,4.175){$\beta^-$}
\usefont{T1}{ptm}{m}{n}
\rput(7.22,4.355){$\delta^+$}
\usefont{T1}{ptm}{m}{n}
\rput(8.66,4.375){$\delta^-$}
\usefont{T1}{ptm}{m}{n}
\rput(3.38,5.155){$\gamma^+$}
\usefont{T1}{ptm}{m}{n}
\rput(1.66,1.115){$\gamma^-$}
\usefont{T1}{ptm}{m}{n}
\rput(7.72,1.935){{\bf Case $m$ odd and $j$ odd}}
\psline[linewidth=0.04cm](0.0,5.49)(11.54,5.51)
\psline[linewidth=0.04cm](0.2,0.53)(11.64,0.51)
\psline[linewidth=0.04cm,linecolor=red](0.24,-3.01)(5.16,-3.03)
\psline[linewidth=0.04cm,linecolor=red](2.6,-1.03)(2.62,-5.13)
\psline[linewidth=0.04cm,linecolor=red](0.9,-4.55)(4.42,-1.41)
\psline[linewidth=0.04cm,linecolor=red](1.06,-1.43)(4.22,-4.67)
\psbezier[linewidth=0.04,arrowsize=0.05291667cm 2.0,arrowlength=1.4,arrowinset=0.4]{<-}(2.22,-1.03)(2.64,-2.01)(2.84,-3.99)(2.08,-4.99)
\psline[linewidth=0.04cm,linecolor=red](5.62,-3.03)(8.34,-3.03)
\psbezier[linewidth=0.04,linecolor=color1623,arrowsize=0.05291667cm 2.0,arrowlength=1.4,arrowinset=0.4]{->}(6.3,-3.01)(7.84,-2.93)(7.8,-2.37)(8.16,-1.89)
\psbezier[linewidth=0.04](6.3,-3.0369918)(7.44,-2.95)(7.56,-2.33)(7.68,-1.91)
\psbezier[linewidth=0.04](6.3,-3.05)(7.18,-3.03)(7.64,-3.49)(7.86,-4.21)
\usefont{T1}{ptm}{m}{n}
\rput(8.53,-2.125){$\beta^+$}
\usefont{T1}{ptm}{m}{n}
\rput(8.63,-3.865){$\beta^-$}
\usefont{T1}{ptm}{m}{n}
\rput(7.74,-1.545){$\delta^+$}
\usefont{T1}{ptm}{m}{n}
\rput(7.72,-4.545){$\delta^-$}
\usefont{T1}{ptm}{m}{n}
\rput(2.32,-0.665){$\gamma^+$}
\usefont{T1}{ptm}{m}{n}
\rput(1.78,-5.285){$\gamma^-$}
\psbezier[linewidth=0.04,linecolor=color1623,arrowsize=0.05291667cm 2.0,arrowlength=1.4,arrowinset=0.4]{<-}(6.38,-3.05)(7.92,-3.13)(7.88,-3.69)(8.24,-4.17)
\psbezier[linewidth=0.04,linestyle=dashed,dash=0.16cm 0.16cm](0.64,-3.43)(1.62,-3.01)(3.6,-2.81)(4.6,-3.57)
\psbezier[linewidth=0.04,linestyle=dashed,dash=0.16cm 0.16cm](0.64,-2.65)(1.62,-3.07)(3.6,-3.27)(4.6,-2.51)
\psbezier[linewidth=0.04,linestyle=dashed,dash=0.16cm 0.16cm](3.0,-1.07)(2.58,-2.05)(2.38,-4.03)(3.14,-5.03)
\usefont{T1}{ptm}{m}{n}
\rput(8.09,-0.645){{\bf Case $m$ odd and $j$ even}}
\psline[linewidth=0.04cm](0.3,-5.77)(11.66,-5.75)
\psline[linewidth=0.04cm,linecolor=red](1.0,-9.41)(4.52,-7.23)
\psline[linewidth=0.04cm,linecolor=red](2.64,-6.37)(2.66,-10.47)
\psline[linewidth=0.04cm,linecolor=red](0.84,-7.33)(4.48,-9.41)
\psbezier[linewidth=0.04,arrowsize=0.05291667cm 2.0,arrowlength=1.4,arrowinset=0.4]{<-}(4.3,-6.59)(4.1,-7.47)(2.16,-8.95)(0.86,-9.09)
\psbezier[linewidth=0.04,linestyle=dashed,dash=0.16cm 0.16cm](3.14,-10.33)(2.52,-9.41)(2.52,-7.45)(3.06,-6.25)
\psbezier[linewidth=0.04,linestyle=dashed,dash=0.16cm 0.16cm](0.86,-7.81)(2.2,-7.79)(3.78,-8.97)(4.22,-9.95)
\psline[linewidth=0.04cm,linecolor=red](5.7,-8.59)(9.8,-8.59)
\psbezier[linewidth=0.04,linecolor=color1623,arrowsize=0.05291667cm 2.0,arrowlength=1.4,arrowinset=0.4]{->}(5.68,-7.67)(7.06,-8.95)(8.64,-8.89)(9.7,-7.51)
\usefont{T1}{ptm}{m}{n}
\rput(9.99,-7.265){$\beta^+$}
\usefont{T1}{ptm}{m}{n}
\rput(5.61,-7.445){$\beta^-$}
\usefont{T1}{ptm}{m}{n}
\rput(9.24,-7.145){$\delta^+$}
\usefont{T1}{ptm}{m}{n}
\rput(6.48,-7.305){$\delta^-$}
\usefont{T1}{ptm}{m}{n}
\rput(4.44,-6.305){$\gamma^+$}
\usefont{T1}{ptm}{m}{n}
\rput(0.92,-8.705){$\gamma^-$}
\psbezier[linewidth=0.04](6.26,-7.69)(7.0,-9.01)(8.9,-8.753513)(9.18,-7.55)
\usefont{T1}{ptm}{m}{n}
\rput(8.07,-9.465){{\bf Case $m$ even and $j$ odd}}
\end{pspicture} 
}
\caption{The left hand figures are  $F^{-1}(\R)$ (in red) and $F^{-1}(F(\g))$ (in black).  The shape of $\beta^\pm$ is determined by the parity of $j$. The curves $\de^\pm$ are in the same half planes as
$\beta^\pm$ due to the fact that $F-f$ is real. The angle between $\de^\pm$ is determined by the parity of $m$, as $F(z)=e^{i\theta}z^{m+1}$. }\label{cases}\end{figure}

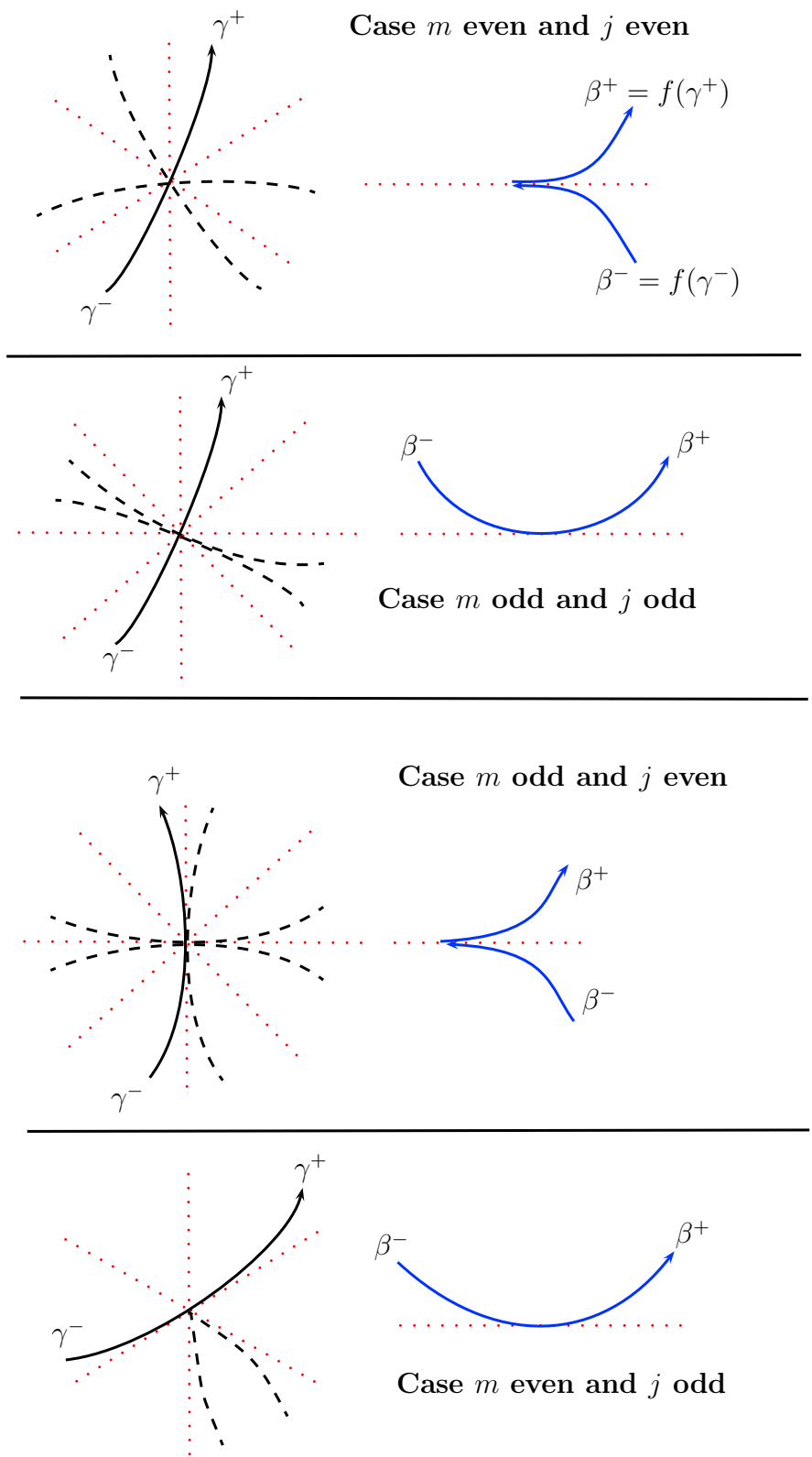
\begin{figure}\hspace{2cm}
\scalebox{1} 
{
\begin{pspicture}(0,-10.4892)(11.68,10.509199)
\definecolor{color4115}{rgb}{0.0,0.2,1.0}
\psline[linewidth=0.04cm,linecolor=red,linestyle=dotted,dotsep=0.16cm](0.72,7.010801)(4.28,9.230801)
\psline[linewidth=0.04cm,linecolor=red,linestyle=dotted,dotsep=0.16cm](2.36,10.0508)(2.38,5.950801)
\psline[linewidth=0.04cm,linecolor=red,linestyle=dotted,dotsep=0.16cm](0.7,9.210801)(4.1,6.850801)
\psbezier[linewidth=0.04,arrowsize=0.05291667cm 2.0,arrowlength=1.4,arrowinset=0.4]{<-}(2.98,10.030801)(2.98,9.230801)(1.84,6.6508007)(1.44,6.430801)
\psbezier[linewidth=0.04,linestyle=dashed,dash=0.16cm 0.16cm](3.7,6.470801)(3.18,6.6908007)(1.66,8.870801)(1.5,9.870801)
\psbezier[linewidth=0.04,linestyle=dashed,dash=0.16cm 0.16cm](0.44,7.530801)(1.68,8.170801)(3.9,8.090801)(4.48,7.870801)
\psline[linewidth=0.04cm,linecolor=red,linestyle=dotted,dotsep=0.16cm](5.2,7.990801)(9.3,7.990801)
\psbezier[linewidth=0.04,linecolor=color4115,arrowsize=0.05291667cm 2.0,arrowlength=1.4,arrowinset=0.4]{->}(7.34,8.030801)(8.5,7.990801)(8.64,8.330801)(9.1,9.130801)
\usefont{T1}{ptm}{m}{n}
\rput(9.451455,9.315801){$\beta^+=f(\gamma^+)$}
\usefont{T1}{ptm}{m}{n}
\rput(9.611455,6.575801){$\beta^-=f(\gamma^-)$}
\usefont{T1}{ptm}{m}{n}
\rput(3.241455,10.315801){$\gamma^+$}
\usefont{T1}{ptm}{m}{n}
\rput(1.3214551,6.155801){$\gamma^-$}
\psbezier[linewidth=0.04,linecolor=color4115,arrowsize=0.05291667cm 2.0,arrowlength=1.4,arrowinset=0.4]{<-}(7.34,7.970801)(8.54,7.970801)(8.62,7.6908007)(9.14,6.850801)
\usefont{T1}{ptm}{m}{n}
\rput(7.4497266,10.275801){{\bf Case $m$ even and $j$ even}}
\psline[linewidth=0.04cm,linecolor=red,linestyle=dotted,dotsep=0.16cm](0.16,2.9308007)(5.08,2.9108007)
\psline[linewidth=0.04cm,linecolor=red,linestyle=dotted,dotsep=0.16cm](2.52,4.910801)(2.54,0.8108008)
\psline[linewidth=0.04cm,linecolor=red,linestyle=dotted,dotsep=0.16cm](0.82,1.3908008)(4.34,4.530801)
\psline[linewidth=0.04cm,linecolor=red,linestyle=dotted,dotsep=0.16cm](0.98,4.510801)(4.14,1.2708008)
\psbezier[linewidth=0.04,arrowsize=0.05291667cm 2.0,arrowlength=1.4,arrowinset=0.4]{<-}(3.12,4.910801)(3.12,4.1108007)(1.98,1.5308008)(1.58,1.3108008)
\psbezier[linewidth=0.04,linestyle=dashed,dash=0.16cm 0.16cm](4.61,2.4808009)(3.34,2.3308008)(1.78,3.1508007)(0.91,3.9808009)
\psbezier[linewidth=0.04,linestyle=dashed,dash=0.16cm 0.16cm](0.71,3.4008007)(1.51,3.4008007)(4.1,2.2708008)(4.31,1.8608007)
\psline[linewidth=0.04cm,linecolor=red,linestyle=dotted,dotsep=0.16cm](5.72,2.9108007)(9.82,2.9108007)
\psbezier[linewidth=0.04,linecolor=color4115,arrowsize=0.05291667cm 2.0,arrowlength=1.4,arrowinset=0.4]{->}(5.98,3.9676285)(6.72,2.4908009)(8.96,2.6508007)(9.62,4.050801)
\usefont{T1}{ptm}{m}{n}
\rput(9.991455,4.215801){$\beta^+$}
\usefont{T1}{ptm}{m}{n}
\rput(5.971455,4.175801){$\beta^-$}
\usefont{T1}{ptm}{m}{n}
\rput(3.361455,5.155801){$\gamma^+$}
\usefont{T1}{ptm}{m}{n}
\rput(1.641455,1.1158007){$\gamma^-$}
\usefont{T1}{ptm}{m}{n}
\rput(7.7097263,1.9358008){{\bf Case $m$ odd and $j$ odd}}
\psline[linewidth=0.04cm](0.0,5.490801)(11.54,5.510801)
\psline[linewidth=0.04cm](0.2,0.53080076)(11.64,0.5108008)
\psline[linewidth=0.04cm,linecolor=red,linestyle=dotted,dotsep=0.16cm](0.24,-3.0091991)(5.16,-3.0291991)
\psline[linewidth=0.04cm,linecolor=red,linestyle=dotted,dotsep=0.16cm](2.6,-1.0291992)(2.62,-5.129199)
\psline[linewidth=0.04cm,linecolor=red,linestyle=dotted,dotsep=0.16cm](0.9,-4.549199)(4.42,-1.4091992)
\psline[linewidth=0.04cm,linecolor=red,linestyle=dotted,dotsep=0.16cm](1.06,-1.4291992)(4.22,-4.669199)
\psbezier[linewidth=0.04,arrowsize=0.05291667cm 2.0,arrowlength=1.4,arrowinset=0.4]{<-}(2.22,-1.0291992)(2.64,-2.0091991)(2.84,-3.9891992)(2.08,-4.989199)
\psline[linewidth=0.04cm,linecolor=red,linestyle=dotted,dotsep=0.16cm](5.62,-3.0291991)(8.34,-3.0291991)
\psbezier[linewidth=0.04,linecolor=color4115,arrowsize=0.05291667cm 2.0,arrowlength=1.4,arrowinset=0.4]{->}(6.3,-3.0091991)(7.84,-2.9291992)(7.8,-2.3691993)(8.16,-1.8891993)
\usefont{T1}{ptm}{m}{n}
\rput(8.511456,-2.1241992){$\beta^+$}
\usefont{T1}{ptm}{m}{n}
\rput(8.611455,-3.8641992){$\beta^-$}
\usefont{T1}{ptm}{m}{n}
\rput(2.301455,-0.66419923){$\gamma^+$}
\usefont{T1}{ptm}{m}{n}
\rput(1.761455,-5.284199){$\gamma^-$}
\psbezier[linewidth=0.04,linecolor=color4115,arrowsize=0.05291667cm 2.0,arrowlength=1.4,arrowinset=0.4]{<-}(6.38,-3.049199)(7.92,-3.1291993)(7.88,-3.6891992)(8.24,-4.169199)
\psbezier[linewidth=0.04,linestyle=dashed,dash=0.16cm 0.16cm](0.64,-3.4291992)(1.62,-3.0091991)(3.6,-2.8091993)(4.6,-3.5691993)
\psbezier[linewidth=0.04,linestyle=dashed,dash=0.16cm 0.16cm](0.64,-2.6491992)(1.62,-3.0691993)(3.6,-3.2691991)(4.6,-2.5091991)
\psbezier[linewidth=0.04,linestyle=dashed,dash=0.16cm 0.16cm](3.0,-1.0691992)(2.58,-2.049199)(2.38,-4.029199)(3.14,-5.029199)
\usefont{T1}{ptm}{m}{n}
\rput(8.079726,-0.6441992){{\bf Case $m$ odd and $j$ even}}
\psline[linewidth=0.04cm](0.3,-5.7691994)(11.66,-5.7491994)
\psline[linewidth=0.04cm,linecolor=red,linestyle=dotted,dotsep=0.16cm](1.0,-9.409199)(4.52,-7.2291994)
\psline[linewidth=0.04cm,linecolor=red,linestyle=dotted,dotsep=0.16cm](2.64,-6.3691993)(2.66,-10.469199)
\psline[linewidth=0.04cm,linecolor=red,linestyle=dotted,dotsep=0.16cm](0.84,-7.3291993)(4.48,-9.409199)
\psbezier[linewidth=0.04,arrowsize=0.05291667cm 2.0,arrowlength=1.4,arrowinset=0.4]{<-}(4.3,-6.589199)(4.1,-7.469199)(2.16,-8.9492)(0.86,-9.089199)
\psbezier[linewidth=0.04,linestyle=dashed,dash=0.16cm 0.16cm](3.14,-10.329199)(2.78,-9.409199)(2.88,-9.849199)(2.68,-8.429199)
\psbezier[linewidth=0.04,linestyle=dashed,dash=0.16cm 0.16cm](2.64,-8.389199)(3.5180962,-9.011902)(3.593438,-8.975145)(4.08,-9.909199)
\psline[linewidth=0.04cm,linecolor=red,linestyle=dotted,dotsep=0.16cm](5.7,-8.589199)(9.8,-8.589199)
\psbezier[linewidth=0.04,linecolor=color4115,arrowsize=0.05291667cm 2.0,arrowlength=1.4,arrowinset=0.4]{->}(5.68,-7.669199)(7.06,-8.9492)(8.64,-8.889199)(9.7,-7.509199)
\usefont{T1}{ptm}{m}{n}
\rput(9.971455,-7.2641993){$\beta^+$}
\usefont{T1}{ptm}{m}{n}
\rput(5.591455,-7.444199){$\beta^-$}
\usefont{T1}{ptm}{m}{n}
\rput(4.421455,-6.304199){$\gamma^+$}
\usefont{T1}{ptm}{m}{n}
\rput(0.9014551,-8.704199){$\gamma^-$}
\usefont{T1}{ptm}{m}{n}
\rput(8.059727,-9.464199){{\bf Case $m$ even and $j$ odd}}
\end{pspicture} 
}
\caption{The cocritical set $f^{-1}(f(\g))=f^{-1}(\beta)$. We have kept the red lines for reference. In each sector $S$ bounded by red lines, the number of branches of $f^{-1}(f(\g))$ is equal to that of $F^{-1}(F(\g))$ (refer to Figure \ref{cases}), except in 
the two sectors containing $\g^\pm$, where $f^{-1}(f(\g))\subset \g$. }\label{co-critical}
\end{figure}

\subsection{Prescribing numerical invariants or   local models for harmonic mappings}

Now we are ready to prove Corollary \ref{existence}. Due to the equality $j+m^2=\mu$,  we only need to prove that   given two integers $\mu,m$ satisfying  $m\ge 1$ and $\mu\ge m^2+m$ there exist
harmonic maps  of the form $g(z)=p(z)^m-\bar z^m$   such that $\mu(g,0)=\mu$.

  Assume that $p(z)= z+ bz^2+ o(z^2)$ with $|b|=1$. 

In the case $\mu=m^2+m$, one can take $p$ such that $(-b^2)^m\ne 1$
and apply Lemma \ref{relation}. 

Assume now $\mu> m^2+m$, in particular $\mu> 2$. Choose $b=i$. Then $-b^2=1$ is always a $m$-th root of unity. 
And $f_{-b^2}(z)=f_1(z)=p(z)-\bar z$. Choose $p$ such that $\mu(f_1,0)-2=\mu-(m^2+m)$ and apply Lemma \ref{relation}. 

Now given a pair of positive integers $n^\pm$ with $n^+=n^-$, resp. with $|n^+-n^-|=1$, one can use the table \Ref{n-plus} to find a suitable pair $m$ and $j$,
or the table \Ref{N-plus} to find a suitable pair $m$ and $ \mu$, and proceed as above to find an harmonic map realising the model.  
\qed

Here are some concrete examples realizing a given pair $(\mu,m)$ with $\infty\ge \mu\ge m^2+m$.

If $\infty>\mu=m^2+m$, take any $p(z)=z+bz^2$ with $|b|=1$ and  $(-b^2)^m\ne 1$. Then $\mu(p(z)^m-\bar z^m,0)=\mu$ .

If $\infty>\mu> m^2+m$, set $\nu=\mu-(m^2+m)+2=\mu-(m-1)(m+2)$ and
$p_\nu(z)=z\sum_{s= 0}^{\nu-2} (iz)^s + a z^\nu$ with $\Re a\ne 0,\pm 1$
and $g_\nu(z)=(p_\nu(z))^m-\zbar^m$. Then $\mu(g,0)=\mu$.

If $\mu=\infty$, set  
$p(z)=-\dfrac{z}{1-z}$ and $g(z)=p(z)^m-\overline z^m$. We have $\overline p\circ p(z)=p\circ p(z)=z$, and
$$\mu(g,0)= \sum_{\xi^m=1,\eta^m=1, \xi,\eta\ne 1}Ord_0 \Big(\eta\, \overline p(\xi \, p(z))-z\Big)+ Ord_0 \Big( \overline p(p(z))-z\Big)= \infty.$$
One can also check by hand that  $j(g,0)=\infty$.

\appendix

\section{Analytic  planar maps at a regular critical point}

Let $K=\R$ or $\C$. Let  $f: \lv x \\ y \rv \mapsto f\lv x \\ y \rv $ be a $K$-analytic map with $a$ as a regular critical point. The critical set $\CCC_f$, as a level set of $J_f$, is everywhere orthogonal to the gradient  vector field $(\partial_x J_f, \partial _y J_f)$. The unique curve $\G(t)$ satisfying
$$\G'(t)= \lv -\partial_yJ_f(\G(t))\\ \partial_xJ_f(\G(t))\rv, \quad \G(0)=a\ $$  thus parametrizes the critical curve $\{J=0\}$. Now the map $f$ transports the curve $\G(t)$ to the critical value set,
inducing thus a natural local  parametrization $t\mapsto \Sigma(t)=f(\G(t))$.

In this section we prove that the critical value curve of a $K$-analytic map  at a regular critical point takes always a pair $(j, j+1)$ as its order-pair, and $\mu=j+1$.  
 We then give a recursive algorithm computing $j$, thus $\mu$. 
 
 \subsection{Critical value order-pair and multiplicity}
 
 \REFTHM{real-analytic} Let $K= \R$ or $\C$. Let $W\subset K^2$ be an open neighborhood of $w_0\in K^2$, and $F: W\to K^2$  a $K$-analytic mapping with  $w_0$ as a regular critical point. Then,
\begin{enumerate} \item (critical value order-pair) Let $j$ be the order at $F(w_0)$ of the critical value curve in its natural parametrization. This curve has an order-pair   of the form $(1,\infty)$ if $j=1$, and  $(j,j+1)$ if $1<j<\infty$.  
\item  (critical value order and multiplicity)  The order $j$ is related to the multiplicity by the formula
\REFEQN{formula-regular}   j+1=\mu(F,w_0).\ENDEQN
\item (topological model in the reals) In the case $K=\R$ and $\mu(F,w_0)<\infty$, \\
$\left\{\!\!\!\text{\begin{tabular}{l}  $\mu(F,w_0)$ even, or\\ $\mu(F,w_0)$ odd \end{tabular}}\right.$ iff 
 there is a pair of topological local changes of coordinates  $h,H$ of $\R^2$,
so that $H\circ F\circ h$ takes the standard  $\left\{\!\!\!\text{\begin{tabular}{l} fold form $\lv x\\ y \rv\mapsto \lv x \\ y^2\rv$, or \\
 cusp form $\lv x\\ y\rv\mapsto \lv x\\ xy+ y^3\rv$.\end{tabular}}\right.$ In particular, outside the critical value set, 
the number of preimages  is either 0 or 2 in the fold case, and 1 or 3 in the cusp case. 
\end{enumerate}\ENDTHM
\beginp We will make a sequence of analytic changes of coordinates to $F$. This will lead to 
new maps whose critical value curves differ from that of $F$ by analytic changes of coordinates. We will see that in some suitable coordinates
the critical value curve has an order pair in the form $(j, j+1)$.  If $j>1$ then $j, j+1$ are co-prime and the pair becomes then an analytic invariant.
It follows that the  critical value curve of our original map has also the same order-pair. 

Precompose $F$ by a translation if necessary we may assume ${ w_0}={\bf 0}$. Denote by $DF_{\bf 0}$ the
differential of $F$ at ${\bf 0}$.

Precompose and post-compose $F$ by some rotations if necessary we may assume that $Ker (DF_{\bf 0})$ is the $y$-axis and $Image(DF_{\bf 0})$ is the $x$-axis.
Divide  $F$ by a non-null constant in $K$ if necessary we may further assume $DF_{\bf 0}\lv 1\\ 0 \rv=\lv 1 \\ 0\rv$. 

It follows that the Jacobian matrix $Jac_F({\bf 0})$ is $\lv 1 & 0 \\ 0 & 0 \rv$.

Write now $F\lv x \\ y \rv=\lv f(x,y)\\ g(x,y)\rv $ and set $\phi\lv x \\ y \rv=\lv f(x,y)\\ y \rv$. Then $Jac_{\phi}({\bf 0})=Id$.
It follows that $\phi$ is a local diffeomorphism.

Replace now $F$ by $F\circ \phi^{-1}$ we may assume $F$ takes the form
$$ F\lv x \\ y \rv=\lv x \\ g(x,y)\rv, \quad DF_{\bf 0}=\lv 1 & 0 \\ 0 & 0 \rv \ .$$

By assumption $(\nabla J_F)_{\bf 0}\ne (0,0)$. Set $(\nabla J_F)_{\bf 0}=(b,b')$. Note that $J_F=\dfrac{\partial g}{\partial y}$. We get thus the following local expansion
$$\dfrac{\partial g}{\partial y} = 0 + b x + b' y + R_{\ge 2}(x,y) = x (b+a(x,y) )+ y^j(c + R_{\ge 1}(y))$$
for some $\infty\ge j\ge 1$ ($\infty\ge j\ge 2$ if $b'=0$) and $b\cdot c\ne 0$, where the function $a(x,y)$ has no constant term.

The case $b'\ne 0$. The curve $\beta$ can be parametrized by $x$ and has the order-pair    $(1,\infty)$ at $0$. \marginpar{details ??}

We will only treat the  case $b\cdot b'\ne 0$. Then $j\ge 2$. We will prove that the critical value 
curve has $(j, j+1)$ as its order-pair at ${\bf 0}$.

Set $B(x)=g(x,0)$. We have $B(0)=0$. Post-compose now $F$ by $\lv u \\ v\rv \mapsto \lv u \\ v-B(u)\rv$ we may assume \REFEQN{FF} F \lv x \\ y\rv=\lv x \\ g(x,y)\rv\quad \text{with}\quad  g(x,y)= \ds  \int_0^y \dfrac{\partial g}{\partial y}(x,y) dy. \ENDEQN

{\bf Case $j=\infty$}.  We see that $\CCC_F$ is locally the $y$-axis   and $g(x,y)=   b\,xy \cdot A(x,y)$ with $A(0,0) =1$.
So  \REFEQN{infinit} F\lv x\\ y\rv =\lv x \\ b\,xy\cdot A(x,y)\rv, \quad A(0,0)=1\ENDEQN
and  $F|_{\CCC_F}$ is locally  constant. We may also make a change of variable $\lv x_1 \\ y_1 \rv=\lv  x \\ b\,y\cdot A(x,y) \rv=\Phi\lv x\\ y\rv$. 
Clearly  $\Phi$ is locally invertible and $F\circ \Phi^{-1} \lv x_1 \\ y_1 \rv= \lv  x_1\\ x_1y_1\rv$.

{\bf Case  $j<\infty$}. The map $F$ takes the form
$$F\lv x\\ y\rv =\lv x \\  \dfrac{y^{j+1}}{j+1}(c + s(y))+b\,xy\cdot A(x,y)\rv, \quad A(0,0)=1.$$

Let $y_1$ be the analytic function in $y$ tangent to the identity at $0$  so that $y_1^{j+1}=y^{j+1}(1+s(y)/c)$
and change the variable $y$ to $y_1$, 
one can further reduce $F$ to the following form (by abuse of notation we use again $y$ to denote the new variable):
\REFEQN{finite } F\lv x\\ y\rv =\lv x \\  \dfrac{c\,y^{j+1}}{j+1} +b\,xy\cdot \hat A(x,y)\rv, \quad  \hat A(0,0)=1.\ENDEQN

Note that $J_F(x,y)=\dfrac{\partial}{\partial y} \left(\dfrac{c\,y^{j+1}}{j+1} +b\,xy\cdot \hat A(x,y) \right)=c\, y^j+ b\cdot x(1+C(x,y))$
for some function $C(x,y)$ that vanishes at $(0,0)$.

Solving now the implicit equation $ J_F(x,y)=0$, we see that the critical set $\CCC_F$ is locally parametrized  by $y\mapsto \g(y)=\lv  x(y)\\ y\rv$
with
$x(y)= -\dfrac c{b} y^j+R_{\ge j+1}(y)$.

We may now compute the critical value curve in this coordinate :
$$\beta(y)=F(\g(y))= F\lv x(y)\\ y\rv=\lv x(y) \\ \dfrac{c\,y^{j+1}}{j+1}-b\dfrac{c\,y^j}{b}y+ R_{\ge j+2} (y)\rv =\lv -\dfrac{c\,y^j} {b} +R_{\ge j+1}(y)\\ \dfrac{-c\,j}{j+1}y^{j+1} + R_{\ge j+2} (y)\rv \ .$$

It follows that $\beta$ has the order-pair    $(j,j+1)$ at $0$. This proves Point 1.

Let us prove that the  multiplicity $\mu(F,{\bf 0})$ of $F$ at ${\bf 0}$ is $j+1$. 
This multiplicity is equal to $$\limsup_{(x,y)\to {\bf 0}} \# \left(U\cap F_{\C}^{-1}\lv x \\ y \rv\right)$$
with $U$ a small neighborhood of ${\bf 0}$ in $\C^2$.

By \Ref{finite } if $x=0$ and $y$ is close to $0$ but $y\ne 0$, then $\# F^{-1}\lv 0 \\ y \rv =j+1$. This is also true for $\lv x \\ y \rv$ close to ${\bf 0}$ by Rouch\'e's theorem applied to the second coordinate function of $F$ as a function of $y$. We know that 
$$\forall\, \ep >0, \ \exists\,\eta>0\ s.t.\ \forall |s|,|t|<\eta,\ |g(s,y)-t|_{|y|=\ep}\ne 0\ .$$
This proves Point 2.

Point 3. Assume $K=\R$. All   functions below will have real coefficients. One can write $x(y)=-\dfrac c b (y+R_{\ge 2} (y))^j$.

If $j+1$ is even the function $x(y)$ is a local homeomorphism of an interval about $0$ so has a unique inverse. It follows that on each vertical line $x=c$ for $c$
small, the map $F$ has a unique critical point. Since $F$ sends the line into itself, it must be a fold.

The case $j+1$ odd: $x(y)$ is a convex curve staying on one half plane, say the left half plane. It follows that for  every $c>0$ small, the map
$F$ sends the  vertical line $x=c$  homeomorphically to itself. And for $c<0$ small $F$ on the line $x=c$ behaves topologically as $ay+ y^3$ with $a<0$.
So $F$ is a topological cusp.  

The rigorous constructions of the changes of coordinates are very similar to Claims 1 and 8 in the proof of Theorem \ref{Lyzzaik2}. As our map $F$ here preserves vertical lines, we may instead choose to pull back small rectangles $]-r,r[\times ]-s,s[$ so that the upper and lower boundary segments
do not intersect the critical and co-critical sets. We omit the details. 
\qed

Remark that in the proof we have also established a collapsing model in $K$: We have  $\mu(F,w_0)=\infty$ iff  there is a pair of $K$-analytic changes of coordinates $\varphi$ and $\Phi$
so that $\Phi\circ F\circ \varphi$ 
takes the standard collapsing form  $\lv x\\ y\rv \mapsto \lv x \\ xy\rv$.

Remark also that Whitney has given a geometric model in $K$ for the 'stable singularity' cases: $\left\{\!\!\!\text{\begin{tabular}{l} $\mu(F,w_0)= 2$\\  $\mu(F,w_0)=3$\end{tabular}}\right.$
iff 
there is a pair of $K$-analytic changes of coordinates $h,H$
so that $H\circ F\circ h$ takes the standard $\left\{\!\!\!\text{\begin{tabular}{l} fold form $\lv x\\ y \rv\mapsto \lv x \\ y^2\rv$\\
 cusp form $\lv x\\ y\rv\mapsto \lv x\\ xy+ y^3\rv$.\end{tabular}}\right.$

\subsection{A recursive algorithm computing $j$}

 This subsection is inspired by a conversation with H.H. Rugh.

For  a $C^\infty$ planar mapping $f$, let $J$ be the jacobien of $f$. In the following both our
 domaine and range planes will be $\R^2$ identified with $\C$. In this spirit the jacobien will
also be considered as a map with range in $\R$.

 Consider now a map $f$ from $U$ to $\C$, we define 
$$\nabla_{\R} f=(f_x, f_y)\text{ and  }\nabla f=(f_z,f_{\bar z}):=\Big(\dfrac12(f_x-if_y),\dfrac12(f_x+if_y)\Big).$$
 Mimicking Whitney's definition for folds and cusps, we set recursively
\REFEQN{M}M_1=\left|\begin{array}{ll} \nabla_{\R} J \\ \nabla_{\R} f\end{array}\right|,\ M_2=
\left|\begin{array}{ll} \nabla_{\R} J \\ \nabla_{\R} M_1\end{array}\right|, \cdots,\ 
M_k=\left|\begin{array}{ll} \nabla_{\R} J \\ \nabla_{\R} M_{k-1}\end{array}\right|,\cdots\ ;\ENDEQN
\REFEQN{L}L_1=\left|\begin{array}{ll} \nabla f \\ \nabla J\end{array}\right|,\ L_2=
\left|\begin{array}{ll} \nabla L_1 \\ \nabla J\end{array}\right|, \cdots,\ 
L_k=\left|\begin{array}{ll} \nabla L_{k-1} \\ \nabla J\end{array}\right|,\cdots\ .\ENDEQN

\REFPROP{Reals}  Let $f:(\R^2,a)\to (\C,f(a))$ be a smooth map. We have, 
\[ \forall\ n\ge 1, M_n=(2i)^nL_n.
\]
Let $\G(t)$ the trajectory of the vector field $(-J_y(z), J_x(z))$ with initial point $a$, and set $\Sigma(t)=f(\G(t))$. We have
\[ \forall\ n\ge 1,\ \Sigma^{(n)}(t)= M_n(\G(t))=(2i)^nL_n(t).
\]
In particular 
$ \Sigma^{(n)}(0)=M_n(a)=(2i)^nL_n(a)\ .$
\ENDPROP
\beginp 
Let $G,H: (\R^2,a)\to (\C,G(a))$ be two $C^\infty$ smooth mappings. 

I. We claim first
\REFEQN{generals} \left|\begin{array}{ll} \nabla_{\R} H \\ \nabla_{\R}G \end{array}\right|=2i \left|\begin{array}{ll} \nabla G \\ \nabla H \end{array}\right|.\ENDEQN
Proof.  
Recall that $\nabla_{\R}H=(H_x,H_y)$ and $\nabla_{\R}G=( G_x, G_y)$.
It follows from 
$G_z=\dfrac12 (G_x-iG_y)$ and $G_{\overline z} =\dfrac 12 (G_x+iG_y)$  that $G_x=G_z+ G_{\overline z}$ and  $G_y=i(G_z- G_{\overline z})$.
So $$ \left|\begin{array}{ll} \nabla_{\R} H \\ \nabla_{\R}G \end{array}\right|=
 \left|\begin{array}{ll} H_x & H_y  \\ G_x & G_y  \end{array}\right| =
i \left|\begin{array}{ll} H_z+ H_{\zbar} & H_z-H_{\zbar} \\ G_z+G_{\zbar} & G_z-G_{\zbar} \end{array}\right| =-2i \left|\begin{array}{ll} H_z & H_{\zbar} \\ G_z & G_{\zbar} \end{array}\right| =-2i \left|\begin{array}{ll} \nabla H \\ \nabla G \end{array}\right| .$$

II. Apply now \Ref{generals} to $G=f$ and $H=J$, we get $M_1=(2i)L_1$, then to $G=M_1$ we get
$$M_2=\left|\begin{array}{ll} \nabla_{\R} J \\ \nabla_{\R}M_1 \end{array}\right|
=(2i) \left|\begin{array}{ll} \nabla_{\R} J \\ \nabla_{\R}L_1 \end{array}\right|\stackrel{\Ref{generals}}{=}(2i)^2\left|\begin{array}{ll} \nabla L_1 \\ \nabla J \end{array}\right|= (2i)^2 L_2\ .$$
By induction \REFEQN{all} M_n=(2i)^n L_n,\quad \forall\,n\ge 1\ .\ENDEQN

III. We claim now
\REFEQN{Generals} \dfrac d {dt} G(\G(t))=\left|\begin{array}{ll} \nabla_{\R} J \\ \nabla_{\R}G \end{array}\right|_{\G(t)}\ .\ENDEQN
Proof. Using the fact that $\G'(t)=(-J_y, J_x)|_{\G(t)}$,  for any $v\in \C^2$ we have $$  \langle v, \G'(t)\rangle= \left|\begin{array}{c} J_x \  J_y \\ v \end{array}\right|_{\G(t)}=\left|\begin{array}{c} \nabla_{\R} J \\ v \end{array}\right|_{\G(t)}.$$
 Write $ G: \lv x\\ y\rv\mapsto \lv G_1(x,y)\\ G_2(x,y)\rv$ (by identifying the range plane to $\R^2$). 
 Then $$\dfrac d {dt} G(\G(t))=DG|_{\G(t)}(\G'(t))=\lv \mystrut \langle
 \nabla_{\R}G_1(\G(t)),\G'(t)\rangle \\ \langle \nabla_{\R}G_2(\G(t)),\G'(t)\rangle\rv=
\lv \mystrut \left|\begin{array}{ll} \nabla_{\R} J \\ \nabla_{\R}G_1 \end{array}\right| \vspace{0.2cm}\\
\left|\begin{array}{ll} \nabla_{\R}J \\ \nabla_{\R}G_2\end{array}\right|\rv_{\G(t)} \ . $$
Identify $G(x,y)$ with $G_1(x,y)+i G_2(x,y)$.  We have
$$\dfrac d {dt} G(\G(t))=  \left|\begin{array}{ll} \nabla_{\R} J \\ \nabla_{\R}G_1 \end{array}\right|_{\G(t)}+i\left|\begin{array}{ll} \nabla_{\R}J \\ \nabla_{\R}G_2\end{array}\right|_{\G(t)}= \left|\begin{array}{ll} \nabla_{\R} J \\ \nabla_{\R}G \end{array}\right|_{\G(t)}\ .$$

IV. Apply now \Ref{Generals} inductively to $G=f,M_1,M_2,\cdots$, we get
$$\Sigma'(t)=\dfrac{d}{dt} f(\G(t))= \left|\begin{array}{ll} 
                       \nabla_{\R} J \\ \nabla_{\R} f                                     \end{array}\right|_{\G(t)}=M_1(\G(t));$$
 $$\Sigma''(t)= \dfrac{d}{dt} M_1(\G(t))=\left|\begin{array}{ll} 
                       \nabla_{\R} J \\ \nabla_{\R} M_1                                     \end{array}\right|_{\G(t)}= M_2(\G(t))\ .$$
By induction $\Sigma^{(n)}(t)=M_n(\G(t))$, and $\Sigma^{(n)}(0)=M_n(\G(0))= M_n(a)$.

Combining with \Ref{all} we get $\Sigma^{(n)}(0)=(2i)^n L_n(a)$ as well.
\qed
\begin{corollary}
The invariant $j$ is the first integer $n$ for which $L_n(0)\ne 0$.
\end{corollary}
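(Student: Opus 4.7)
The proof will be essentially a one-line deduction from the preceding Proposition \ref{Reals}, combined with the definition of the invariant $j$.

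My plan is as follows. First, I recall that by the definition of the critical value order, $j$ is the order at $t=0$ of the natural parametrization $\Sigma(t) = f(\Gamma(t))$ of the critical value curve (with the convention $j = +\infty$ if $\Sigma \equiv \Sigma(0)$). By the standard relation between the order of an analytic curve and its Taylor derivatives, $j$ is the smallest integer $n \geq 1$ such that $\Sigma^{(n)}(0) \neq 0$.

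Next, I invoke the identity $\Sigma^{(n)}(0) = (2i)^n L_n(a)$ proved in Proposition \ref{Reals} (which itself rests on the identity $M_n = (2i)^n L_n$ and the fact that $\Sigma^{(n)}(t) = M_n(\Gamma(t))$). Since the factor $(2i)^n$ is nonzero for every $n \geq 1$, the two conditions $\Sigma^{(n)}(0) \neq 0$ and $L_n(a) \neq 0$ are equivalent, so the smallest $n$ realizing one is the smallest $n$ realizing the other. Taking $a=0$ as the coordinate of the critical point yields exactly the statement of the corollary, and in the degenerate case $\Sigma \equiv \Sigma(0)$ the equivalence also correctly accounts for $j = +\infty$, namely $L_n(0)=0$ for every $n$.

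There is essentially no obstacle beyond unpacking the definition; the nontrivial work has already been carried out in establishing Proposition \ref{Reals}, in particular the inductive computation $\Sigma^{(n)}(t) = M_n(\Gamma(t))$ via formula \Ref{Generals}, and the algebraic identity \Ref{generals} converting the real Jacobian determinant into the complex one.
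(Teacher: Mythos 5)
Your proposal is correct and is exactly the intended argument: the paper states this corollary without proof as an immediate consequence of Proposition \ref{Reals}, and your deduction (the order of $\Sigma$ at $0$ is the first $n$ with $\Sigma^{(n)}(0)\ne 0$, which by $\Sigma^{(n)}(0)=(2i)^nL_n(0)$ equals the first $n$ with $L_n(0)\ne 0$) is precisely that consequence.
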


\section{Examples}

These examples illustrate some differences 
between the harmonic and the general real analytic case. 

1. General remarks: 

a. In the harmonic case, by a theorem of Hans Lewy, 
the  locus of non local injectivity is the same as the 
critical set. In particular, this implies that $C_f$ 
and hence $V_f$  have a topological meaning. 
This is no longer true in the real analytic case, 
as is shown for instance by the map: 
$(x,y)\to (x,y^3)$. 

b. One can easily  check  that 
for a real analytic  planar germ $g$ 
from $({\Bbb C},0)$ into itself, 
the critical set  (and the locus of non injectivity )
are the same for the germs  $g$ and $g^n$ ($n\in {\Bbb N}^*)$ outside the origin.  

c. As  the case of a regular critical point was studied before, we 
give examples for which the gradient of the Jacobian vanishes at the origin. 

 Here are examples of planar analytic germs $g$  at the origin ($C_g$ is smooth and coincides with 
the locus of non local injectivity).

A. In general, the condition $\mu=j+m^2$ is not satisfied in the analytic case. 
The map $g(x,y)= (x, x^2y^2+y^4)$ is a simple example. The critical set 
is the $x$-axis.  One has: $j=m=1$ and $\mu=4$.

B. Topological differences. 

1. $g(x,y)=  (x+iy^2)^2$  has local topological degree $0$ at the origin,
but $V_f$
is $[0,+\epsilon [$.  So the germ at the origin is not topologically equivalent 
to the germ of a harmonic map. 

2.  A real analytic germ with a smooth critical set  can have any local  topological degree. For instance, take 
a harmonic map $f$ of degree $1$  or $-1$ and put  $g= f^n$. This map 
has  local topological degree $n$ or $-n$, and then,  is not   topologically 
equivalent to a harmonic germ for $n>1$. 

C. Problem for the parametrization of $V_g$. 

$g(x,y)=(x+iy^2)^3$.  The parametrization of $V_g$  which comes 
from  the parametrization $x=t, y=0$ of  $C_f$  is of the 
non-injective  form $x=t^3, y= 0$, when $t$ is complex. 

D. Examples with Puiseux pair   $(2,5)$ 

$g(x,y)= (x^2+y^2, x^5 +cx^3y^2+xy^4)$. 

The Jacobian is equal to $2y((2c-5)x^4+(4-3c)x^2y^2-y^4)$. Then 
For $4/3<c<5/2$, one gets an example satisfying the wanted conditions ($C_g=\{y=0\}$).
Moreover, it is topologically a fold. 

One can also check that $m=j=2$ , $\mu=\infty$  for $c= 2$  and $\mu=10$ otherwise.

LAREMA, D\'epartement de Math\'ematiques, Universit\'e d'Angers, 2, Bd Lavoisier, 49045 Angers, Cedex 01, France.
e-mail addresses:
\\
Mohammed.ElAmrani@univ-angers.fr\\
Michel.Granger@univ-angers.fr\\
Jean-Jacques.Loeb@univ-angers.fr\\
tanlei@math.univ-angers.fr

\end{document}